\newcommand{\cN}{\mathcal{N}}
\newcommand{\cL}{\mathcal{L}}
\newcommand{\bR}{\mathbb{R}}
\newcommand{\bS}{\mathbb{S}}
\newcommand{\rd}{\mathrm{d}}
\newcommand{\sfd}{\mathsf{d}}
\newtheorem{theorem}{Theorem}[section]
\newtheorem{proposition}[theorem]{Proposition}
\newtheorem{assumption}[theorem]{Assumption}
\newtheorem{example}{Example}
\numberwithin{equation}{section}
\begin{document}    
    \title[The Subsampled Poincar\' e Inequality]{Function Approximation via The Subsampled Poincar\' e Inequality}
    \author{Yifan Chen and Thomas Y. Hou}
    \address{Applied and Computational Mathematics, Caltech, 91106}
    \email{yifanc@caltech.edu, hou@cms.caltech.edu}
    
    \date{\today}
    \keywords{Poincar\'e Inequality, Subsampled Data, Function Approximation and Recovery, Degeneracy, Weighted Inequality.}    
    \subjclass[2010]{65D05, 65D07, 41A44, 35A23, 62G05.}
    
    \maketitle
    \begin{abstract}
        Function approximation and recovery via some sampled data have long been studied in a wide array of applied mathematics and statistics fields. Analytic tools, such as the Poincar\'e inequality, have been handy for estimating the approximation errors in different scales. The purpose of this paper is to study a generalized Poincar\' e inequality, where the measurement function is of subsampled type, with a small but non-zero lengthscale that will be made precise. Our analysis identifies this inequality as a basic tool for function recovery problems. We discuss and demonstrate the optimality of the inequality concerning the subsampled lengthscale, connecting it to existing results in the literature. In application to function approximation problems, the approximation accuracy using different basis functions and under different regularity assumptions is established by using the subsampled Poincar\'e inequality. We observe that the error bound blows up as the subsampled lengthscale approaches zero, due to the fact that the underlying function is not regular enough to have well-defined pointwise values. A weighted version of the Poincar\' e inequality is proposed to address this problem; its optimality is also discussed. 
    \end{abstract}
    \tableofcontents
    \section{Introduction}
    Approximating a function based on some partial sampled data has been an important topic in applied mathematics, statistics, and the emerging big data science. For a function that is defined in a continuous domain, analytic tools such as the Poincar\'e inequality have been useful in analyzing the approximation errors. Often, depending on the scale that people are looking at, some model parameters may be potentially very small or large. Getting estimates that can capture the dependence on these parameters and remain valid in the small or large limit regime is crucial for understanding the problem. In this paper, we consider a subsampled lengthscale $h$ in the sampled data, and study the approximation in the finite $h$ regime and small limit regime. Several variants of the Poincar\'e inequalities are investigated to achieve this goal, and we will explain their implications for problems of function approximation and recovery in the subsampled data scenario.
    \subsection{Motivation}
    \label{sec: motivation}
    The Poincar\'e inequality, in one of its forms, states that for a bounded, connected and open domain $\Omega \subset \bR^d$ with a Lipschitz boundary, there exists a constant $C(d,p)$, depending on $d$ and $p$ only, such that for every function $u$ in the Sobolev space $W^{1,p}(\Omega)$, it holds
    \[\|u-(u)_{\Omega} \|_{L^p(\Omega)} \leq C(d,p)  \mathrm{diam}(\Omega) \|Du\|_{L^p(\Omega)}\, . \]
    Here, $(u)_{\Omega}$ is the average of $u$ in $\Omega$, i.e. $(u)_{\Omega}=\int_{\Omega} u(x) \,\rd x/ \mu_d(\Omega)$, and $\|\cdot\|_{L^p(\Omega)}$ stands for the $L^p$ norm of a function in $\Omega$. We use  $\mu_d(\Omega)$ to represent the volume of the $d$-dimensional domain $\Omega$, and $\operatorname{diam}(\Omega)$ is the corresponding diameter.
    
    This inequality leads to a nice implication in problems of function approximation and recovery. Consider a function $u$ in $W^{1,p}(\Omega)$ and we know it has bounded oscillation in the sense that $\|Du\|_{L^p(\Omega)} \leq M$ for some $M>0$. To gain more information about $u$, we measure the average data $(u)_{\Omega}$, and try to recover $u$ as accurately as possible using this data. A simple choice of the recovery can be the constant function $(u)_{\Omega}$.  Despite being so simple, guaranteed error control in the $L^p$ norm, according to the Poincar\'e inequality, is given by $ C(d,p) \text{diam}(\Omega)M$, in the worst case.
    
    The data $(u)_{\Omega}$, being an average of $u$ in the whole domain $\Omega$, is of a global scale; it is thus inadequate to capture the fine-scale information. 
    Therefore, to improve the approximation accuracy, a straightforward strategy is to place more sensors in the physical domain $\Omega$, and measure more refined data in small scales. For demonstration of ideas, we assume the domain $\Omega=[0,1]^d$ and it is partitioned evenly into $1/H^d$ cubes, each with lengthscale $H$; see Figure \ref{fig:two-scalegrid}. Mathematically, we write $\Omega= \bigcup_{i\in I} \omega_i^H$ where $\omega_i^H$ is the cube indexed by $i \in I$; we have the cardinality $|I|=1/H^d$ as desired. For each $i$, the small scale data $(u)_{\omega^H_i}$ is measured, which is the average of $u$ in the local domain $\omega^H_i$. As before, we can build a recovery of $u$ using this data; a simple choice is the piecewise constant function $u^H$, with value  $(u)_{\omega^H_i}$ in the cube $\omega^H_i$ for every $i\in I$. Then, the following error control of this recovery holds:
    \begin{align*}
    \|u-u^H\|^p_{L^p(\Omega)}&=\sum_{i \in I} \|u-(u)_{\omega_i^H}\|^p_{L^p(\omega_i^H)} \\ 
    &\leq C(d,p)^p H^p \sum_{i \in I}  \|Du\|_{L^p(\omega_i^H)}^p
    = C(d,p)^p H^p \|Du\|_{L^p(\Omega)}^p\, .
    \end{align*}
    It follows  $\|u-u^H\|_{L^p(\Omega)} \leq C(d,p) MH$. From this estimate, we see that the worst-case error decreases with the rate of $O(H)$ as we refine the measurements. It should be the best error rate in $L^p$ norm that one can expect when we know $\|Du\|_{L^p(\Omega)} \leq M$ only.
    \subsection{Generalization to Subsampled Data}
    The example in the first subsection demonstrates the usefulness of the Poincar\' e inequality for estimating recovery residues. Many estimates in approximation theory and numerical analysis, e.g., the error estimate of finite element methods, rely on similar ideas. Error control in the small scales is established first, and then a suitable global coupling scheme yields the final approximation. Inspecting the example, we observe there may be two potential components that can be further generalized: (1) the type of measurement data, which is an average of the function in the local cube; (2) the local recovery basis function, which is set as a constant in each cube. 
    
        For the first component, in this paper, we are interested in \textit{subsampled data}, which is an average of $u$ in the set $\omega_i^{h,H} \subset \omega_i^H$ that has a possibly smaller lengthscale compared to that of the patch $\omega_i^H$ for each $i \in I$; that means, $h \leq H$, see Figure \ref{fig:two-scalegrid} for an illustration in the two-dimensional case. It is a generalization for the $h=H$ case in Subsection \ref{sec: motivation}. Why shall we consider such a generalization? In physics, the measurement data of a field is often the macroscopic averaged quantity, sometimes represented by the integration over a small region; that is called the frequency/energy truncation, and anything with a smaller lengthscale is ignored. The subsampled measurements match the context naturally. Furthermore, the subsampled data is more general than the data of the Diracs type, which corresponds to the case $h=0$. The pointwise value of a function is not well-defined if the function does not have enough regularity, as a result of the Sobolev embedding theorem \cite{evans_partial_2010}. Thus, studying the behavior of these subsampled data may lead to a more well-behaved yet general mathematical problem.
    Another possibility is setting these small scale data into integration against some low-dimensional sets rather than local cubes, for example, hyperplanes. In that case, the scale of data becomes anisotropic.
    \begin{figure}
        \centering
        \includegraphics[width=0.7\linewidth]{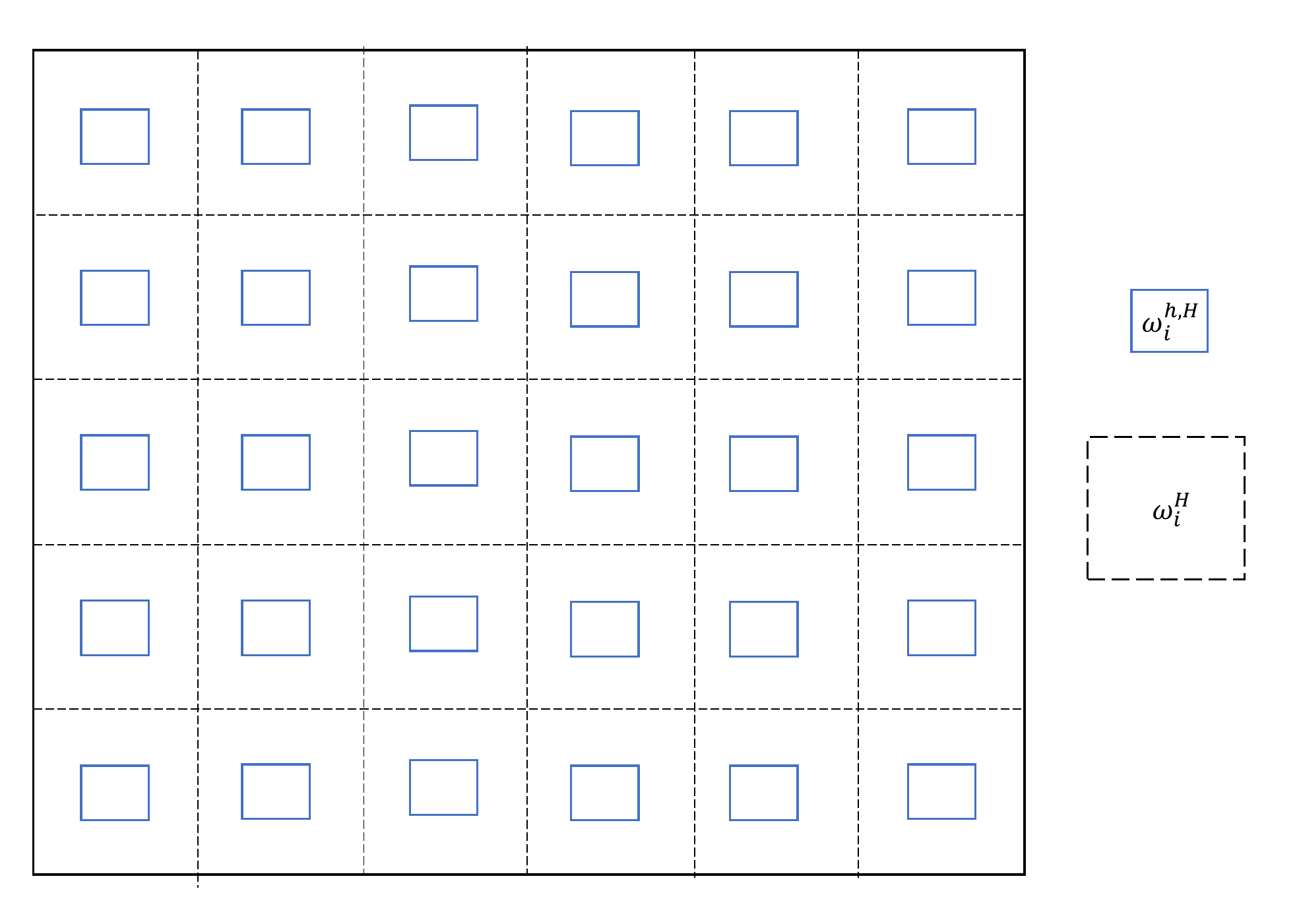}
        \caption{Domain $\Omega=[0,1]^2$; the local cube $\omega_i^{H}$ and the subsampled cube $\omega_i^{h,H}$}
        \label{fig:two-scalegrid}
    \end{figure}
    
    Regarding the second component, i.e., the local recovery basis functions associated with the sampled data, there has been a vast literature discussing the case $h=0$, such as the field of interpolation, approximation theory, spline theory in numerical analysis, Gaussian process regression and kernel regression in non-parametric statistics. Constructing a good recovery with optimal error estimates is essential in these fields. The case $h\neq 0$ relates to Cl\'ement interpolation \cite{clement1975approximation} and has found lots of applications in adaptive finite element methods. The case $h=H$ has been recently connected to applications in numerical homogenization and multiscale computational methods for PDEs \cite{owhadi_multigrid_2017, owhadi2019operator}; see also the $h=0$ case \cite{owhadi2014polyharmonic} for such an application. The $0<h< H$ case in such a setting has not been explored in depth yet. This subsampled case is the one that we would like to investigate in more detail, to understand its implications in contexts of function approximation and multiscale PDEs. This paper concentrates on the problem of function approximation, while we will include discussions for multiscale PDE problems and other applications in our subsequent paper \cite{chen-hou_numerical}.
    
    \subsection{Our Contributions}
    In the first part of this work, we establish a generalized Poincar\'e inequality, discuss the proof strategy, and prove its optimality concerning $h$, in the setting of subsampled data, in Section \ref{sec: general poincare}. Similar result has been obtained in the literarure; we will  discuss them in the corresponding sections.
    We also cover the case when the subsampled data is integration against some low-dimensional hyperplanes. 
    
    Given this subsampled Poincar\'e inequality, we move to study different local basis functions for the recovery that can attain desired approximation accuracy when $u$ is in different function spaces. We start with the piecewise constant recovery, in the same spirit as Subsection \ref{sec: motivation}. To improve the regularity of basis functions, we borrow ideas in the spline approximation theory and establish the corresponding error estimates in Section \ref{sec: improving the basis function}. This approach directly connects to the context of multiscale PDEs; see the work of rough polyharmonic splines \cite{owhadi2014polyharmonic} and Gamblets \cite{owhadi_multigrid_2017}.
    
    In the problem of function approximation, we observe that when the underlying function is not regular enough, the error bound of the recovery blows up as we decrease the subsampled scale $h$. The reason is that the pointwise value of a $W^{1,p}(\Omega)$ function is not well-defined if $d\geq p$. This degeneracy is not desired, and the second part of the work in Section \ref{sec:weighted estimate and semi-supervised} is to discuss a way to fix this issue. We found that if more structures are imposed on $u$, here typically $\int_{\Omega} w(x)|Du(x)|^p\, \rd x < \infty$ for some weight functions that are singular at the data points, then we can obtain non-degenerate recovery. We establish a weighted Poincar\' e inequality to analyze the recovery accuracy; the optimality of this weighted inequality is also discussed.
    
    \subsection{Related Works} We list the related work in terms of different topics. There has been a vast literature on the Poincar\'e inequality of different types, function approximation and recovery, and weighted spaces and inequalities. It is not our goal here to provide an exhaustive review; we will mainly cover papers that we found to exhibit the most direct connection to this work.
    \subsubsection{Generalized Poincar\'e Inequality}
    \label{subsection: generalized poincare}
    Many people have considered extending the constant $(u)_{\Omega}$ in the Poincar\' e inequality to a general linear functional on $u$. In \cite{meyers1978integral,meyers1977integral}, the authors analyzed the condition of the functional in great depth. In Chapter 4 of \cite{ziemer2012weakly}, a unified approach of the Poincar\'e inequality was discussed by studying the norm of this linear functional.  In \cite{alessandrini2008linear}, the linear constraints in Poincar\' e and Korn type inequalities were investigated. Our subsampled measurements can be seen as a special case of their linear functional or linear constraints. Nevertheless, the motivation is different, and their results do not directly lead to the optimal rate on $h$. In the literature, we found a result similar to ours in Corollary 2.7 of \cite{ruiz_note_2012} with a different proof strategy. In the critical $p=d$ case, their rate on $h$ is a little tighter than ours up to a logarithmic term. We show that this rate is indeed optimal concerning $h$ in Proposition \ref{prop: sharpness of the rate}.
    \subsubsection{Function Recovery and Basis Functions}
    The optimal recovery problem has been framed in \cite{micchelli1977survey}. The authors of the book \cite{owhadi2019operator} discuss the game-theoretical and Bayesian methods for optimal recovery and numerical homogenization, which serve as one of the main motivations of this work. Finding appropriate basis functions that can yield a smaller recovery error is essential; often, piecewise constant recovery cannot do the best job, and we need to consider recovery with better regularity. The general strategy we adopt for improving the regularity of basis functions is to apply the inverse of some differential operator, say $\cL=-\nabla\cdot (a\nabla\cdot)$, to the subsampled constant measurement functions supported in the domain $\omega_i^{h,H}$. When $h=0$ and the coefficient $a$ in the elliptic operator is constant one (i.e., $\cL$ is the negative Laplacian operator), our improved basis function reduces to the polyharmonic splines \cite{harder1972interpolation, duchon1977splines}. When $h=0$ or $H$, and the coefficient $a$ is in $L^{\infty}(\Omega)$, then the improved basis reduces to Gamblets \cite{owhadi_multigrid_2017} and rough polyharmonic splines \cite{owhadi2014polyharmonic}. In this paper, we mainly study the improved basis functions for $0<h<H$. We remark that in \cite{owhadi2019operator}, the discussion of the measurement function entails a great generality, and some general conditions on the measurements are proposed to guarantee the approximation accuracy. Our $h \in (0,H)$ case does satisfy their conditions, but their results do not cover the optimal dependence regarding $h$. For the function recovery using subsampled data, obtaining the optimal recovery rate is important, which is the focus of our current work.
    \subsubsection{Weighted Inequality and Degeneracy}
    There has been a vast literature on the weighted Sobolev space and weighted Poincar\' e inequality. To the best of our knowledge, most of them focus on the scenario that both the left-hand side and right-hand side of the inequality are weighted. In our case, we only set the gradient norm on the right-hand side to be weighted. Moreover, we can connect this inequality to applications in function recovery that suffers from degeneracy as $h$ tends to zero. A similar issue in the context of graph Laplacian based semi-supervised learning has bee discussed in \cite{Semi-supervised}. Since then, there has been a lot of literature dealing with this issue, for example, by using higher-order regularization \cite{high_order_regularization}, $p$-Laplacian regularization for large $p$ \cite{p_Laplacian_Jordan, p_Laplacian_analysis}, Lipschitz learning (corresponding to $p=\infty$) \cite{Lipschitz_learning, Lipschitz_learning_analysis}, or changing the weights in the regularization \cite{WNLL}.  Recently, in \cite{calder_properly-weighted_2018}, a singular weight function is proposed to address this problem, which attains a well-defined continuous limit. Our weight function has a form similar to theirs.
    \subsection{Notation}
    We present our notations here. We use $\chi_A(x)$ for the characteristic function of the set $A$. The diameter of a set $\Omega \subset \bR^d$ is denoted by $\text{diam}(\Omega)$. For a function in Euclidean space $\bR^d$ with variable $x$, i.e. $f(x)$, the integration on a measurable set $A$ against the Lebesgue measure will be denoted by $\int_A f(x)\, \rd x$, while the integration with respect to a measure $\lambda$ will be written as $\int_A f(x) \, \rd \lambda(x)$. When there is no ambiguity, the variable name ``$x$'' in the integration may be omitted for simplicity. $L^p(\Omega)$ stands for the space of $p$th power summable functions over $\Omega$ with the corresponding norm $\|\cdot\|_{L^p(\Omega)}$, and $W^{1,p}(\Omega)$ represents the standard Sobolev space on the domain $\Omega$. We use $|\cdot|$ for both the absolute value of a scalar and the modulus of a vector. When we say a set $\Omega$ is a domain, it refers to a connected, open set. The $d$ dimensional Lebesgue measure of $\Omega \subset \bR^d$ (i.e. the volume) is written as $\mu_d(\Omega)$. For $k < d$, we use $\mu_k(\Gamma)$ to represent the $k$ dimensional Hausdorff measure of a $k$ dimensional measurable subset $\Gamma \subset \bR^d$. The $d$-dimensional ball with center $x \in \bR^d$ and radius $r$ is denoted by $B^d(x,r)$.
    
    Throughout the paper, $C(d,p)$ (resp. $C(d)$) stands for a positive generic constant that depends on $d,p$ (resp. $d$) only and may attain different values at different places.
    
    \subsection{Organization of This Paper} We organize the paper as follows. In Section \ref{sec: general poincare}, we discuss a generalized version of the Poincar\' e inequality. As an application, we establish the optimality of the subsampled Poincar\' e inequality. The case of sliced measurement data, i.e., integration against hyperplanes, is also covered here; related optimality issues are discussed. In Section \ref{sec: improving the basis function}, we consider an improvement of the basis functions using ideas from the spline approximation theory, motivated by the work on rough polyharmonic splines \cite{owhadi2014polyharmonic} and Gamblets \cite{owhadi_multigrid_2017}. In Section \ref{sec:weighted estimate and semi-supervised}, we present a weighted Poincar\'e inequality and use it to deal with the degeneracy issue in the recovery. Finally, we conclude the paper in Section \ref{sec: discussion}. In order to demonstrate the main ideas smoothly without overloading the reader too much, some proofs are deferred to the appendix in Section \ref{sec: appendix}. 
    
    \section{A Generalized Poincar\' e Inequality}
    \label{sec: general poincare} 
    In this section, we provide a generalized version of the Poincar\'e inequality, which allows a general linear functional of $u$ beyond $(u)_{\Omega}$. The generalized Poincare inequality has been studied in some previous works, see e.g. \cite{meyers1978integral,meyers1977integral}. Our purpose is to provide a version with quantitative estimates of the approximation error since we are interested in the optimal rate of the approximation error regarding some small scale parameter. 
    
    We begin with reviewing the approaches for proving the Poincar\'e inequality in the literature, and then present our proofs and applications to subsampled data.
    \subsection{A Review of Techniques}\label{sec: review of techniques for poincare}
    The standard way of proving the Poincar\'e inequality is by the argument of contradiction, thanks to the compactness of related function spaces; see Chapter 5.8.1 of \cite{evans_partial_2010}, and Theorem 12.23 in \cite{first_course_sobolev}. This type of argument leads to the existence of the constant only, with no quantitative characterization. To prove the inequality with an explicit constant, we adopt the strategy of expressing the left-hand side of the inequality directly as an integration of the gradient by Newton-Leibniz's rule, and then estimate the contribution of different parts properly. One can arrange these parts using polar coordinates, leading to estimates suitable for star-shaped domains; see page 164 of \cite{trudinger}, Theorem 12.36 in \cite{first_course_sobolev}, and the proof in \cite{veeser2012poincare}. Our approach is to use a change of variables in the integral and estimate the volumes of some related sets. This approach has been adopted in Poincar\'e's elementary proof of the inequality for $p=2$, according to \cite{poincare_slide} (page 8, Poincar\'e's proof by duplication) and \cite{Poincare1890}. We identify an additional step of a weighted H\"older inequality that can sharpen the dependence on $h$, yielding the optimal rate for the case $d\neq p$.
    
    Another more abstract approach for obtaining quantitative constants is to estimate the norm of a related linear functional in a constrained function subspace; see \cite{meyers1978integral,meyers1977integral}, the unified approach in Chapter 4 of \cite{ziemer2012weakly}, and the work \cite{alessandrini2008linear}. As noted in Subsection \ref{subsection: generalized poincare}, this way of proof can lead to a generalized Poincar\'e inequality, with $(u)_{\Omega}$ replaced by some linear functional on $u$. The proof in the paper \cite{ruiz_note_2012} also relies on this idea.
    
    \subsection{The Main Inequality} In this subsection, we present the proofs of our main results, Theorems \ref{thm: general Poincare} and \ref{thm: general poincare for general p}. To begin with, we present the assumption on the domain below.
    \begin{assumption}
        \label{assumption: for general Poincare}
        Let $\Omega \subset \bR^d \ (d \geq 2)$ be a bounded convex domain. The measure $\lambda$ is non-negative with a unit mass on $\overline{\Omega}$.
    \end{assumption}
    We note that a bounded convex set has a Lipschitz boundary; for this reason, we do not need additional assumptions on the regularity of the boundary. The convexity assumption can be relaxed; see remarks after Theorem \ref{thm: general poincare for general p}.
    
    Under this assumption, we begin with a Poincar\'e inequality for the function space $C^{\infty}(\overline{\Omega})$ in Theorem \ref{thm: general Poincare}; the proof is by calculation using simple calculus.. Then, we generalize it to $W^{1,p}(\Omega)$ for $1\leq p<\infty$ in Theorem \ref{thm: general poincare for general p} through a density argument and a special weighted H\"older inequality. 
    \begin{theorem}
        \label{thm: general Poincare}
        Under Assumption \ref{assumption: for general Poincare}, the following inequality holds for every $u \in C^{\infty}(\overline{\Omega}):$
        \begin{align}
        \|u-\int_{\overline{\Omega}} u \,\rd\lambda \|_{L^1(\Omega)}&\leq \mathrm{diam}(\Omega)\int_{\overline{\Omega}} \left(\int_0^1 \frac{1}{t^d} \lambda(\frac{z-t\overline{\Omega}}{1-t}\cap \overline{\Omega})\, \rd t \right) |Du(z)|  \, \rd z \, .
        \end{align}
    \end{theorem}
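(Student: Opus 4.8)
The plan is to follow the elementary change-of-variables strategy outlined in Subsection~\ref{sec: review of techniques for poincare}, writing the left-hand side directly as an integral of $Du$ and then tracking the geometry through a single substitution. First, since $\lambda$ has unit mass on $\overline{\Omega}$, for every $x\in\Omega$ I would write
\[ u(x)-\int_{\overline{\Omega}}u\,\rd\lambda = \int_{\overline{\Omega}}\bigl(u(x)-u(y)\bigr)\,\rd\lambda(y)\, . \]
Because $\Omega$ is convex, the segment joining any $y\in\overline{\Omega}$ to $x\in\Omega$ stays in $\overline{\Omega}$, so the fundamental theorem of calculus gives $u(x)-u(y)=\int_0^1 Du\bigl((1-t)y+tx\bigr)\cdot(x-y)\,\rd t$, which is legitimate since $u\in C^{\infty}(\overline{\Omega})$. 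Taking absolute values, bounding $|x-y|\le\mathrm{diam}(\Omega)$, and integrating over $x\in\Omega$ yields
\[ \Bigl\|u-\int_{\overline{\Omega}}u\,\rd\lambda\Bigr\|_{L^1(\Omega)} \le \mathrm{diam}(\Omega)\int_{\Omega}\int_{\overline{\Omega}}\int_0^1 \bigl|Du\bigl((1-t)y+tx\bigr)\bigr|\,\rd t\,\rd\lambda(y)\,\rd x\, . \]

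The second step is to rewrite the right-hand side as an integral against $|Du(z)|$. All integrands are non-negative, so Tonelli's theorem permits exchanging the order of integration freely. For fixed $y$ and $t\in(0,1)$ I would substitute $z=(1-t)y+tx$ in the $x$-variable: the Jacobian is $t^d$, so $\rd x = t^{-d}\,\rd z$, and as $x$ ranges over $\overline{\Omega}$ (replacing $\Omega$ by $\overline{\Omega}$ is harmless since $\partial\Omega$ is Lebesgue-null) the point $z$ ranges over $(1-t)y+t\overline{\Omega}\subset\overline{\Omega}$, the inclusion holding by convexity. Pulling the $z$-integral to the outside, the membership $z\in(1-t)y+t\overline{\Omega}$, upon solving $y=\tfrac{z-tx}{1-t}$ with $x\in\overline{\Omega}$, is equivalent to $y\in\tfrac{z-t\overline{\Omega}}{1-t}$. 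Hence $\int_{\overline{\Omega}}\chi_{\frac{z-t\overline{\Omega}}{1-t}}(y)\,\rd\lambda(y)=\lambda\bigl(\tfrac{z-t\overline{\Omega}}{1-t}\cap\overline{\Omega}\bigr)$, and collecting the factor $t^{-d}$ reproduces exactly the claimed weight $\int_0^1 t^{-d}\,\lambda\bigl(\tfrac{z-t\overline{\Omega}}{1-t}\cap\overline{\Omega}\bigr)\,\rd t$.

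The only genuinely delicate point is the bookkeeping in the change of variables: one must be careful that the substituted variable is the outer spatial variable $x$ (at fixed $y$ and $t$), verify that the preimage condition on $y$ is precisely the affine set $\tfrac{z-t\overline{\Omega}}{1-t}$ intersected with the domain $\overline{\Omega}$ of $\lambda$, and confirm the $t^{-d}$ Jacobian. Convexity enters in exactly two spots — to guarantee the connecting segment lies in $\overline{\Omega}$ so that $Du$ is evaluated where it is defined, and to ensure the substituted points $z$ stay in $\overline{\Omega}$ — which explains its role in Assumption~\ref{assumption: for general Poincare}. I emphasize that no convergence hypothesis on the inner $t$-integral is required for the stated inequality: Tonelli delivers it even when the right-hand side is infinite, and controlling the weight $\int_0^1 t^{-d}\lambda(\cdots)\,\rd t$ is deferred to the subsequent estimates. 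Finally, since the argument relies on the pointwise differentiation identity, it is confined here to $u\in C^{\infty}(\overline{\Omega})$, with the extension to $W^{1,p}(\Omega)$ handled separately in Theorem~\ref{thm: general poincare for general p} by density.
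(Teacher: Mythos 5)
Your proposal is correct and follows essentially the same route as the paper's own proof: write the deviation as $\int_{\overline{\Omega}}(u(x)-u(y))\,\rd\lambda$, apply the Newton--Leibniz formula along the segment (valid by convexity), use Tonelli, and perform the change of variables $z=(1-t)\cdot(\lambda\text{-variable})+t\cdot(\text{Lebesgue variable})$ with Jacobian $t^{d}$ so that the preimage condition produces the set $\frac{z-t\overline{\Omega}}{1-t}\cap\overline{\Omega}$. The only difference is a harmless swap of which letter names the $\lambda$-variable versus the Lebesgue variable, and your explicit remarks on Tonelli and on where convexity enters are, if anything, slightly more careful than the paper's.
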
 
    \begin{proof}
    A direct calculation gives
        \begin{equation}
        \label{eqn: Poincare proof 1}
        \begin{aligned}
        &\|u-\int_{\overline{\Omega}} u \,\rd\lambda \|_{L^1(\Omega)}\\
        =&\int_{\overline{\Omega}} \int_{\overline{\Omega}} (u(x)- u(y)) \,\rd \lambda(x)\rd y\\
        \leq& \int_{\overline{\Omega}} \int_{\overline{\Omega}} |u(x)- u(y)| \,\rd\lambda(x)\rd y \, .
        \end{aligned}
        \end{equation} 
        We express the difference $u(x)-u(y)$ through its derivative $Du$ using the Newton-Leibniz rule:
        \begin{align*}
        |u(x)- u(y)|&=\left|\int_0^1 (x-y)\cdot Du((1-t)x+ty) \,\rd t\right|\\
        &\leq \mathrm{diam}(\Omega) \int_0^1 |Du((1-t)x+ty)| \,\rd t\, .
        \end{align*}
        Plugging the above formula into the integral in \eqref{eqn: Poincare proof 1} and using Fubini's theorem, we obtain
        \begin{equation}
        \label{Poincare proof 2}
        \begin{aligned}
        \int_{\overline{\Omega}}  \int_{\overline{\Omega}}|u(x)- u(y)| \, \rd\lambda(x) \rd y 
        \leq \mathrm{diam}(\Omega) \int_0^1\, \rd t \int_{\overline{\Omega}}\int_{\overline{\Omega}} |Du((1-t)x+ty)| \,\rd\lambda(x) \rd y\, .
        \end{aligned}
        \end{equation}
        For any $0\leq t\leq 1$, we have
        \begin{equation}
        \label{eqn: Poincare proof 3}
        \begin{aligned}
        &\int_{\overline{\Omega}}\int_{\overline{\Omega}} |Du((1-t)x+ty)| \,\rd\lambda(x) \rd y\\
        =&\int_{\overline{\Omega}}\, \rd \lambda(x)\int_{\overline{\Omega}} |Du((1-t)x+ty)| \, \rd y\\
        \overset{(a)}{=}&\int_{\overline{\Omega}}\, \rd \lambda(x) \int_{\overline{\Omega}} |Du(z)|\chi_{(1-t)x+t\overline{\Omega}}(z)\frac{1}{t^d}\, \rd z\\
        =&\frac{1}{t^d}\int_{\overline{\Omega}} |Du(z)|\, \rd z \int_{\overline{\Omega}} \chi_{\frac{z-t\overline{\Omega}}{1-t}}(x)\, \rd\lambda(x)\\
        =&\frac{1}{t^d}\int_{\overline{\Omega}} \lambda(\frac{z-t\overline{\Omega}}{1-t}\cap \overline{\Omega})|Du(z)|\, \rd z \, ,
        \end{aligned}
        \end{equation}
        where we have used the change of variables $z=(1-t)x+ty$ in step $(a)$. Since the set $\Omega$ is assumed to be convex, the whole line will lie inside $\Omega$, a fact which is employed in the above calculation. Combining \eqref{Poincare proof 2} and \eqref{eqn: Poincare proof 3} leads to
        \begin{equation}
        \label{eqn: Poincare proof 4}
        \begin{aligned}
        \int_{\overline{\Omega}} \int_{\overline{\Omega}} |u(x)- u(y)| \,\rd\lambda(x)\rd y \leq 
        \mathrm{diam}(\Omega) \int_{\overline{\Omega}} \left(\int_0^1 \frac{1}{t^d} \lambda(\frac{z-t\overline{\Omega}}{1-t}\cap \overline{\Omega})\, \rd t \right) |Du(z)|  \, \rd z \, .
        \end{aligned}
        \end{equation}
        This implies:
        \begin{equation}
        \begin{aligned}
        \label{eqn: Poincare proof 5}
        \|u-\int_{\overline{\Omega}} u \,\rd\lambda \|_{L^1(\Omega)}&\leq \mathrm{diam}(\Omega)\int_{\overline{\Omega}} \left(\int_0^1 \frac{1}{t^d} \lambda(\frac{z-t\overline{\Omega}}{1-t}\cap \overline{\Omega})\, \rd t \right) |Du(z)|  \, \rd z \, .
        \end{aligned}
        \end{equation}
        The proof is completed.
    \end{proof}
    To move further, we assume a condition on the upper bound of the measure; see Assumption \ref{assumption: growth condition for the measure} below. We mention that this assumption will be satisfied for our subsampled measurements, see Propositions \ref{example: subsample Poincare} and \ref{example: sliced Poincare}, so it is suitable for our purpose. We note that in Section \ref{sec:weighted estimate and semi-supervised}, we will make a more refined estimate on $\lambda(\frac{z-\overline{\Omega}}{1-t}\cap \overline{\Omega})$ rather than using the uniform upper bound independent of $z$ in Assumption \ref{assumption: growth condition for the measure}; the refined analysis enables us to get a weighted inequality.
    \begin{assumption}
        \label{assumption: growth condition for the measure} There exists $\alpha(t)$ such that for every $t \in [0,1]$ and $z \in \overline{\Omega}$ it holds that  $\lambda(\frac{z-\overline{\Omega}}{1-t}\cap \overline{\Omega}) \leq \alpha(t)$. 
    \end{assumption}
    With this assumption, the generalized Poincar\'e inequality for $1\leq p <\infty$ is stated in Theorem \ref{thm: general poincare for general p}.
    \begin{theorem}
        \label{thm: general poincare for general p}
        For $1 \leq p < \infty$, under Assumptions \ref{assumption: for general Poincare}, \ref{assumption: growth condition for the measure}, and the additional assumption that $u \to \int_{\overline{\Omega}} u\, \rd \lambda$ is a bounded linear functional on the function space $W^{1,p}(\Omega)$, the following Poincar\'e type inequality holds for any $u \in W^{1,p}(\Omega):$ 
        \begin{equation}
        \begin{aligned}
        \|u-\int_{\overline{\Omega}} u \,\rd\lambda \|_{L^p(\Omega)}\leq \mathrm{diam}(\Omega)\left(\int_0^1 \frac{\alpha(t)^{\frac{1}{p}}}{t^{\frac{d}{p}}}\, \rd t \right)\|Du\|_{L^p(\Omega)}\, . 
        \end{aligned}
        \end{equation}
    \end{theorem}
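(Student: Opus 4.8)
The plan is to establish the inequality first for smooth $u \in C^\infty(\overline{\Omega})$ by refining the computation inside the proof of Theorem \ref{thm: general Poincare}, and then to pass to $W^{1,p}(\Omega)$ by a density argument. Note that Theorem \ref{thm: general Poincare} is only an $L^1$ statement, so I cannot invoke it as a black box; instead I keep its intermediate \emph{pointwise} bound and insert an $L^p$-friendly estimate at exactly the right place. Concretely, for $u\in C^\infty(\overline{\Omega})$ and $y\in\overline{\Omega}$, writing $u(y)-\int_{\overline{\Omega}}u\,\rd\lambda=\int_{\overline{\Omega}}(u(y)-u(x))\,\rd\lambda(x)$ and applying the Newton--Leibniz bound $|u(y)-u(x)|\le \mathrm{diam}(\Omega)\int_0^1|Du((1-t)x+ty)|\,\rd t$ exactly as in that proof, I obtain
\[
\left| u(y) - \int_{\overline{\Omega}} u\,\rd\lambda\right| \le \mathrm{diam}(\Omega)\int_0^1 G(y,t)\,\rd t, \qquad G(y,t) := \int_{\overline{\Omega}} |Du((1-t)x+ty)|\,\rd\lambda(x)\, .
\]

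The decisive step is how one takes the $L^p(\Omega,\rd y)$ norm of the right-hand side. The \emph{special weighted H\"older inequality} I would use is Minkowski's integral inequality, which pulls the $L^p$ norm inside the $t$-integral,
\[
\left\| \int_0^1 G(\cdot, t)\,\rd t\right\|_{L^p(\Omega)} \le \int_0^1 \|G(\cdot, t)\|_{L^p(\Omega)}\,\rd t\, .
\]
For each fixed $t$, since $\lambda$ has unit mass, Jensen's inequality gives $G(y,t)^p \le \int_{\overline{\Omega}} |Du((1-t)x+ty)|^p\,\rd\lambda(x)$; integrating in $y$, using Fubini and the change of variables $z=(1-t)x+ty$ with Jacobian $t^{-d}$ (valid because $\Omega$ is convex) reproduces exactly the set $\frac{z-t\overline{\Omega}}{1-t}\cap\overline{\Omega}$ from Theorem \ref{thm: general Poincare}, whose $\lambda$-mass is at most $\alpha(t)$ by Assumption \ref{assumption: growth condition for the measure}. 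This yields $\|G(\cdot,t)\|_{L^p(\Omega)}\le \frac{\alpha(t)^{1/p}}{t^{d/p}}\|Du\|_{L^p(\Omega)}$, and substituting back through Minkowski produces precisely the constant $\int_0^1 \frac{\alpha(t)^{1/p}}{t^{d/p}}\,\rd t$.

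It remains to extend the estimate to general $u\in W^{1,p}(\Omega)$. Since $\Omega$ is convex, its boundary is Lipschitz and $C^\infty(\overline{\Omega})$ is dense in $W^{1,p}(\Omega)$; choosing $u_n\to u$ in $W^{1,p}$, both $\|u_n-\int u_n\,\rd\lambda\|_{L^p}$ and $\|Du_n\|_{L^p}$ converge to their limits. Here the standing hypothesis that $u\mapsto\int_{\overline{\Omega}}u\,\rd\lambda$ is a bounded linear functional on $W^{1,p}(\Omega)$ is exactly what is needed to guarantee $\int_{\overline{\Omega}} u_n\,\rd\lambda\to\int_{\overline{\Omega}}u\,\rd\lambda$, so the smooth inequality survives the limit.

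The main obstacle I anticipate is Step two: recognizing that one must take the $L^p$ norm \emph{before} integrating in $t$ (Minkowski), rather than applying Jensen to the full $\rd\lambda\,\rd t$ average at once. The naive ordering yields only the weaker constant $\bigl(\int_0^1 \alpha(t)/t^d\,\rd t\bigr)^{1/p}$, whose singularity at $t=0$ is too strong and generally diverges for subsampled measurements; the weighted-H\"older ordering is precisely what moves the $1/p$ power \emph{inside} the $t$-integral and thereby delivers the optimal dependence on the subsampled lengthscale. The density step is routine, but the boundedness assumption on the data functional must be in force for the limiting argument to be legitimate.
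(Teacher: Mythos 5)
Your proof is correct, and it reaches the stated constant by a route that differs in mechanism from the paper's. The paper first applies Jensen's inequality over $\rd\lambda$ to reduce to $\int_{\overline{\Omega}}\int_{\overline{\Omega}}|u(x)-u(y)|^p\,\rd\lambda(x)\,\rd y$, then applies a weighted H\"older inequality in the $t$-variable with an undetermined weight $w(t)$, carries out the change of variables, and finally \emph{optimizes} $w$ by imposing the equality condition $w(t)^{-1/(p-1)}=w(t)\alpha(t)/t^d$, which collapses the product of the two resulting integrals to $\bigl(\int_0^1 \alpha(t)^{1/p}t^{-d/p}\,\rd t\bigr)^p$. You instead keep the $t$-integral outermost, apply Minkowski's integral inequality to exchange it with the $L^p(\Omega,\rd y)$ norm, and only then use Jensen and the change of variables at fixed $t$; this lands on the same constant with no weight to choose. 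The two devices are essentially equivalent (the optimal weighted H\"older \emph{is} how one proves Minkowski's integral inequality without duality), and your diagnosis of the crux --- that the $1/p$ power must go inside the $t$-integral, and that the naive ordering gives the divergent constant $\bigl(\int_0^1\alpha(t)t^{-d}\,\rd t\bigr)^{1/p}$ --- matches the paper's own remark that the weight choice is what secures the optimal $h$-dependence. Two small advantages of your version: it treats $1\le p<\infty$ uniformly, whereas the paper must handle $p=1$ separately because the H\"older exponent $1/(p-1)$ degenerates there; and it removes the optimization step. One small advantage of the paper's version is that the explicit weight makes visible that the constant is the best obtainable from this family of H\"older estimates, and the same weighted-H\"older template is reused later (in the proof of Theorem \ref{thm: weighted Poincare}) where the bound on $\lambda(\frac{z-t\overline{\Omega}}{1-t}\cap\overline{\Omega})$ is no longer uniform in $z$. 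Your density step, including the role of the boundedness of $u\mapsto\int_{\overline{\Omega}}u\,\rd\lambda$, agrees with the paper's.
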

    \begin{proof}
        The result of the case $p=1$ is a direct combination of Theorem \ref{thm: general Poincare}, Assumption \ref{assumption: growth condition for the measure}, and the fact that $C^{\infty}(\overline{\Omega}) \cap W^{1,1}(\Omega)$ is dense in $W^{1,1}(\Omega)$. Since $u \to \int_{\overline{\Omega}} u\, \rd \lambda$ is a bounded linear functional on $W^{1,1}(\Omega)$, the limiting procedure is well-defined.
        
        For the case $1 <p < \infty$, we only need to consider $u \in C^{\infty}(\overline{\Omega}) \cap W^{1,p}(\Omega)$ because this set is dense in $W^{1,p}(\Omega)$. Using Jensen's inequality, we obtain
        \begin{align*}
        &\|u-\int_{\overline{\Omega}} u \,\rd\lambda \|_{L^p(\Omega)}^p\\
        =&\int_{\overline{\Omega}} \left(\int_{\overline{\Omega}} (u(x)- u(y)) \,\rd \lambda(x)\right)^p\rd y\\
        \leq& \int_{\overline{\Omega}} \int_{\overline{\Omega}} |u(x)- u(y)|^p \,\rd\lambda(x)\rd y \, .
        \end{align*} 
        Similar to the proof of Theorem \ref{thm: general Poincare}, we use the Newton-Leibniz rule to express the term $u(x)-u(y)$:
        \begin{align*}
        |u(x)- u(y)|^p&=|\int_0^1 (x-y)\cdot Du((1-t)x+ty) \,\rd t|^p\\
        &\overset{(b)}{\leq} \text{diam}(\Omega)^p \left(\int_0^1 w(t)^{-\frac{1}{p-1}} \, \rd t\right)^{p-1}  \int_0^1 w(t)|Du((1-t)x+ty)|^p \,\rd t\, .
        \end{align*}
        Here, the step $(b)$ is due to the H\"older inequality, in which we introduce a weight function $w(t) \geq 0$. This weight function $w(t)$ will be determined in the subsequent calculations. We remark that without a correct choice of the weight function, we would not be able to obtain an inequality with a constant that has an optimal scaling property with respect to $h$, as in Proposition \ref{example: subsample Poincare} and Proposition \ref{example: sliced Poincare}.
        
        Then, by the same change of variables as in \eqref{eqn: Poincare proof 3}, we get
        \begin{align*}
        &\int_{\overline{\Omega}}\int_{\overline{\Omega}} |Du((1-t)x+ty)|^p \,\rd\lambda(x) \rd y \\=& \frac{1}{t^d}\int_{\overline{\Omega}} \lambda(\frac{z-t\overline{\Omega}}{1-t}\cap \overline{\Omega})|Du(z)|^p\, \rd z\\
        \leq & \frac{\alpha(t)}{t^d}\int_{\overline{\Omega}} |Du(z)|^p \, \rd z \, .
        \end{align*}
        Following the same argument as that in \eqref{eqn: Poincare proof 4} and \eqref{eqn: Poincare proof 5}, we obtain
        \[\|u-\int_{\overline{\Omega}} u \,\rd\lambda \|_{L^p(\Omega)}^p\leq \text{diam}(\Omega)^p\left(\int_0^1 w(t)^{-\frac{1}{p-1}} \, \rd t\right)^{p-1}\left(\int_0^1 \frac{w(t)\alpha(t)}{t^d}\, \rd t \right) \|Du\|_{L^p(\Omega)}^p\, . \]
        Now, we optimize the choice of the weight function $w(t)$. Let 
        \[w(t)^{-\frac{1}{p-1}}= \frac{w(t)\alpha(t)}{t^d}\, ,\]
        which is the condition for the corresponding H\"older inequality to become an equality. Using this weight function, we obtain
        \[\|u-\int_{\overline{\Omega}} u \,\rd\lambda \|_{L^p(\Omega)}\leq \text{diam}(\Omega)\left(\int_0^1 \frac{\alpha(t)^{\frac{1}{p}}}{t^{\frac{d}{p}}}\, \rd t \right)\|Du\|_{L^p(\Omega)}\, . \]
        This completes the proof.
    \end{proof}
    
    As noted before, some requirements in Assumption \ref{assumption: for general Poincare} can be relaxed, such as the convexity of the domain and also the regularity of the boundary; see several remarks below.
    \begin{enumerate}
        \item The convexity assumption of the domain $\Omega$ can be relaxed. For general non-convex domains, we can use the Sobolev extension theorem to extend the function to a larger convex domain, for example, a ball. More precisely, let $\Omega \subset B^d(0,r)$ for some $r > 0$. We use $u$ to represent both the function in $\Omega$ and its extension to $B^d(0,r)$. We define the extension of the measure $\lambda$ to be zero outside $\overline{\Omega}$, so that $\lambda$ is still a measure with a unit mass for the ball. Then, under the assumptions in Theorem \ref{thm: general poincare for general p}, we have the estimate:
        \begin{align*}
            \|u-\int_{\overline{\Omega}} u \,\rd\lambda \|_{L^p(\Omega)}&\leq \|u-\int_{B^d(0,r)} u \,\rd\lambda \|_{L^p(B^d(0,r))}\\
            &\leq  2r\left(\int_0^1 \frac{\alpha(t)^{\frac{1}{p}}}{t^{\frac{d}{p}}}\, \rd t \right)\|Du\|_{L^p(B^d(0,r))}\, , 
        \end{align*}
        where in the second inequality we use the generalized Poincar\'e inequality for the ball; now $\alpha(t)$ is defined in Assumption \ref{assumption: growth condition for the measure} with $\overline{\Omega}$ replaced by $B^d(0,r)$. Since $\lambda$ is a measure with unit mass, we can assume without loss of generality that $\int_{\overline{\Omega}} u(x) \,\rd x=0$ in the above inequality. Then, by the property of the Sobolev extension theorem, we can further bound
        \begin{align*}
            \|Du\|_{L^p(B^d(0,r))}\lesssim \|u\|_{L^p(\Omega)}+\|Du\|_{L^p(\Omega)} \lesssim \|Du\|_{L^p(\Omega)}\, ,
        \end{align*}
        where we use $x \lesssim y$ to indicate there exists some constant $C$ independent of $u$ such that $x\leq Cy$. The second inequality is due to the assumption $\int_{\overline{\Omega}} u(x)\, \rd x =0$ and the standard Poincar\'e inequality for the domain $\Omega$. Thus, we obtain the generalized Poincar\'e inequality for the domain $\Omega$. Moreover, if $\operatorname{diam}(\Omega)$ is of order $r$, then we can replace $r$ by $\operatorname{diam}(\Omega)$ in the estimate, yielding a similar form as in Theorem \ref{thm: general poincare for general p}.
        In this regard, we only need the assumption of the domain that allows the Sobolev extension theorem to hold.
        \item In Assumption \ref{assumption: for general Poincare}, we require a convex domain, which leads to a Lipschitz boundary. For non-convex domain this property may fail. Nonetheless, when $\lambda$ has no mass in the boundary, we may not need any restrictive assumption on the boundary. The density argument of Meyers-Serrin can apply to any generic domain, i.e., $C^{\infty}(\Omega) \cap W^{1,p}(\Omega)$ is always dense in $W^{1,p}(\Omega)$ and all the arguments in the proof follows smoothly. However, when $\lambda$ has mass on the boundary, we need $C^{\infty}(\overline{\Omega}) \cap W^{1,p}(\Omega)$ to be dense in the proof, which requires the regularity assumption on the boundary.
    \end{enumerate}
    That being said, the present version is enough for our purpose of applications in function recovery and multiscale PDEs with subsampled data; this is the topic of the next subsection.
    
        
    \subsection{Applications} As we have seen, Theorem \ref{thm: general poincare for general p} can be applied to a general measure $\lambda$. In this subsection, we choose this general measure in some special form and obtain several specific Poincar\'e inequalities. 
    \subsubsection{Subsampled Data}First, we choose $\lambda$ to be subsampled in a smaller domain, matching the discussion on the subsampled data before. This leads to the following Proposition \ref{example: subsample Poincare}; its proof is in Subsection \ref{subsec: proof subsampled poincare}.
    \begin{proposition}[Subsampled Poincar\'e inequality]
        \label{example: subsample Poincare}
        Consider a bounded convex domain $\Omega \subset \bR^d$ and its measurable subset $D \subset \Omega$. Let $\mu_d(\Omega)=H^d, \mu_d(D)=h^d$, then for any $1 \leq p < \infty$ and $u \in W^{1,p}(\Omega)$, the following inequality holds:
        \[\|u-\frac{1}{h^{d}}\int_D u \|_{L^p(\Omega)} \leq C(d,p) \mathrm{diam}(\Omega) \tilde{\rho}_{p,d}(\frac{H}{h}) \|Du\|_{L^p(\Omega)} \, ,\]
        where
        \begin{equation*}
        \tilde{\rho}_{p,d}(x) =\left\{
        \begin{aligned}
        &1, &d < p \\
        &\ln (x+1), &d=p  \\
        &x^{\frac{d-p}{p}} &d> p \, .
        \end{aligned}
        \right.
        \end{equation*}
        and $C(d,p)$ is a constant that depends on $d$ and $p$ only.
    \end{proposition}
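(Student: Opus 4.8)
The plan is to specialize Theorem \ref{thm: general poincare for general p} to the measure $\lambda = h^{-d}\chi_D\,\rd x$, for which $\int_{\overline{\Omega}} u\,\rd\lambda = h^{-d}\int_D u$. Since $\mu_d(D)=h^d$, this $\lambda$ is non-negative with unit mass on $\overline{\Omega}$, so Assumption \ref{assumption: for general Poincare} holds; and by H\"older's inequality one has $|h^{-d}\int_D u|\le h^{-d/p}\|u\|_{L^p(\Omega)}$, so the averaging functional is bounded on $W^{1,p}(\Omega)$, as the theorem requires. It then remains to verify Assumption \ref{assumption: growth condition for the measure} with a good $\alpha(t)$ and to estimate the resulting integral $\int_0^1 \alpha(t)^{1/p}t^{-d/p}\,\rd t$.

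For the growth condition, I would observe that $\frac{z-t\overline{\Omega}}{1-t}$ is an affine image of $\overline{\Omega}$ scaled by the factor $t/(1-t)$, hence of volume $(t/(1-t))^d H^d$. Bounding the $\lambda$-mass of its intersection with $\overline{\Omega}$ in two ways --- either by the total mass $\lambda(\overline{\Omega})=1$, or by $h^{-d}$ times the volume of the scaled copy --- yields, uniformly in $z$,
\[
    \lambda\!\left(\frac{z-t\overline{\Omega}}{1-t}\cap\overline{\Omega}\right)\le \min\!\left\{1,\ \Big(\tfrac{t}{1-t}\Big)^d \tfrac{H^d}{h^d}\right\}=:\alpha(t).
\]
Writing $r=H/h\ge 1$, the crossover between the two branches occurs at $t_\ast = 1/(r+1)$: for $t<t_\ast$ one has $\alpha(t)=(tr/(1-t))^d$, while for $t\ge t_\ast$ one has $\alpha(t)=1$.

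The core of the argument is then the evaluation of $\int_0^1 \alpha(t)^{1/p}t^{-d/p}\,\rd t$, which splits at $t_\ast$ into $r^{d/p}\int_0^{t_\ast}(1-t)^{-d/p}\,\rd t$ and $\int_{t_\ast}^1 t^{-d/p}\,\rd t$. I would analyze these in the three regimes $d<p$, $d=p$, $d>p$. The key structural point --- and the reason the min-truncation in $\alpha$ is essential --- is that on $[0,t_\ast]$ the dangerous factor $t^{-d/p}$ is replaced by $(1-t)^{-d/p}$, which stays bounded near $t=0$ (indeed $(1-t)^{-d/p}\le 2^{d/p}$ there since $r\ge 1$); this tames the non-integrable singularity that $t^{-d/p}$ would otherwise have at the origin when $d\ge p$. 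Tracking the large-$r$ behaviour, the first piece contributes $\Theta(r^{(d-p)/p})$ in all cases, while the second contributes a bounded constant when $d<p$, a $\ln(r+1)$ term when $d=p$, and an $O(r^{(d-p)/p})$ term when $d>p$; summing these reproduces exactly $\tilde{\rho}_{p,d}(H/h)$ up to a constant $C(d,p)$. Multiplying by $\mathrm{diam}(\Omega)$ from Theorem \ref{thm: general poincare for general p} gives the claim.

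The main obstacle I expect is the bookkeeping in this integral estimate, specifically ensuring the constants are uniform in $r\ge 1$ across the crossover at $t_\ast$ and correctly matching the three cases. The boundary case $d=p$, where a genuine logarithm appears, requires the most care: the point $d/p=1$ makes the antiderivative of $t^{-d/p}$ logarithmic and must be handled separately from the power-law cases.
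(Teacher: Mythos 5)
Your proposal follows the paper's own proof essentially verbatim: the paper also takes $\lambda$ to be the normalized uniform measure on $D$, bounds $\lambda(\frac{z-t\overline{\Omega}}{1-t}\cap\overline{\Omega})$ by $\alpha(t)=\min\{1,(H/h)^d(t/(1-t))^d\}$ with crossover at $t=h/(H+h)$, and evaluates $\int_0^1\alpha(t)^{1/p}t^{-d/p}\,\rd t$ in the three regimes $d<p$, $d=p$, $d>p$ to obtain $\tilde\rho_{p,d}(H/h)$. Your explicit check that the averaging functional is bounded on $W^{1,p}(\Omega)$ is a small welcome addition, but the route and all key estimates coincide with the paper's.
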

    In the literature, we found that in Corollary 2.7 of \cite{ruiz_note_2012}, a similar rate on $h$ is obtained through a different approach. Their strategy is to bound the norm of the related linear functional for a constrained function space, as mentioned in Subsection \ref{sec: review of techniques for poincare}. In the critical case $d=p$, the author of \cite{ruiz_note_2012} uses the tool of Orlicz's space to estimate the norm of the functional, which yields a $\log$ dependence on $H/h$. Indeed, their result is a little tighter in the power of $\log$ than ours. Based on their results, the rate function can be improved to 
    \begin{equation}
    \label{eqn: sharp rate}
    \rho_{p,d}(x) =\left\{
    \begin{aligned}
    &1, &d < p \\
    &(\ln (x+1))^{\frac{d-1}{d}}, &d=p  \\
    &x^{\frac{d-p}{p}} &d> p \, .
    \end{aligned}
    \right.
    \end{equation}
    Thus, the improved subsampled Poincar\'e inequality is given by
    \[\|u-\frac{1}{h^{d}}\int_D u \|_{L^p(\Omega)} \leq C(d,p) \mathrm{diam}(\Omega) \rho_{p,d}(\frac{H}{h}) \|Du\|_{L^p(\Omega)} \, .\]
    Now, we demonstrate the optimality of the above rate concerning $h$, in the case when $\Omega, D$ are balls; see the following proposition \ref{prop: sharpness of the rate}. The proof of this proposition is given in Subsection \ref{subsec: sharpness subsampled}. We would like to mention that the choice of domain being balls is to simplify the construction of critical examples. The optimality shall hold for more general domains by following similar ideas. 
    \begin{proposition}[Optimality of the rate]
        \label{prop: sharpness of the rate}
        Let $\Omega=B^d(0,1), D_h=B^d(0,h)$ be the balls centered at $0$ with radius $1$ and $0< h\leq 1/4$ respectively. Then, for $d \geq p$, there exists a constant $C(d,p)$ that depends on $d$ and $p$ only, such that we can find a sequence of functions $u_h \in W^{1,p}(\Omega)$ that satisfy
        \[\frac{\|u_h-\frac{1}{\mu_d(D_h)}\int_{D_h}u_h\|_{L^p(\Omega)}}{\|Du_h\|_{L^p(\Omega)}}\geq C(d,p)\rho_{p,d}(\frac{1}{h})\, , \]
        for any $0< h\leq 1/4$. 
    \end{proposition}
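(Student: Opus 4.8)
The plan is to exhibit explicit \emph{radial} extremal functions centered at the origin, where the subsampling ball $D_h$ sits, for which the measurement (the average over $D_h$) carries essentially no information about the bulk oscillation of $u_h$. Concretely, I would take $u_h$ to be a truncation of the $p$-capacity extremal of the condenser $(D_h,\Omega)$: set $u_h \equiv 1$ on $D_h = B^d(0,h)$, and let $u_h$ be the radial $p$-harmonic function on the annulus $h < |x| < 1$ with boundary values $1$ on $|x|=h$ and $0$ on $|x|=1$. Writing $u_h(x) = f(|x|)$ reduces everything to one-dimensional integrals in the radial variable, since $|Du_h(x)| = |f'(|x|)|$ and all the $L^p$ norms become weighted integrals $\int_0^1 (\cdot)\,r^{d-1}\,\rd r$ up to the factor $\mu_{d-1}(\mathbb{S}^{d-1})$, which I would absorb into $C(d,p)$.

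\textbf{Numerator and measurement.} Because $u_h \equiv 1$ on $D_h$, the measurement is exactly $\frac{1}{\mu_d(D_h)}\int_{D_h} u_h = 1$, so the numerator becomes $\|u_h - 1\|_{L^p(\Omega)}$. Since $0 \le u_h \le 1$ and $u_h(r) \to 0$ uniformly on $\{1/2 \le |x| \le 1\}$ as $h \to 0$ (the numerator of the radial profile stays bounded while its denominator diverges), I would lower-bound the numerator by integrating over this fixed outer annulus, $\|u_h-1\|_{L^p(\Omega)}^p \ge \int_{1/2 \le |x| \le 1} |u_h-1|^p \ge c(d,p) > 0$, uniformly in $h \le 1/4$. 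Thus the numerator is bounded below by a positive constant independent of $h$, and the whole estimate reduces to the decay of the denominator $\|Du_h\|_{L^p(\Omega)}$, which is precisely the $p$-capacity of the condenser.

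\textbf{Denominator and conclusion.} The remaining step is the explicit evaluation of the gradient energy. For $d > p$, the radial profile is $f(r) = (r^{-\beta}-1)/(h^{-\beta}-1)$ with $\beta = (d-p)/(p-1) > 0$; a direct integration gives $\|Du_h\|_{L^p(\Omega)}^p = C(d,p)\,(h^{-\beta}-1)^{1-p} \sim C(d,p)\,h^{d-p}$ as $h \to 0$, so $\|Du_h\|_{L^p(\Omega)} \sim h^{(d-p)/p}$ and the ratio is $\gtrsim h^{-(d-p)/p} = \rho_{p,d}(1/h)$. For the critical case $d=p$, the profile degenerates to $f(r) = \ln r / \ln h$, and the same computation yields $\|Du_h\|_{L^p(\Omega)}^p = C(d,p)\,|\ln h|^{1-p}$, whence $\|Du_h\|_{L^p(\Omega)} \sim |\ln h|^{(1-p)/p}$ and the ratio is $\gtrsim |\ln h|^{(p-1)/p} = |\ln h|^{(d-1)/d}$, matching $\rho_{p,d}(1/h)$ for small $h$. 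Combining the constant lower bound on the numerator with these upper bounds on the denominator yields the claimed inequality.

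\textbf{Main obstacle.} The only genuinely delicate point is pinning down the exponents in the energy integral so that the critical case reproduces the refined power $(d-1)/d$ rather than the cruder $\log$ rate. This hinges on the observation that the radial exponent $-(\beta+1)p + d - 1$ simplifies to $-(d-1)/(p-1)$, which is strictly less than $-1$ exactly when $d > p$, so the integral $\int_h^1 r^{-(d-1)/(p-1)}\,\rd r$ is dominated by its lower endpoint and scales like $h^{-\beta}$; letting $p \to d$, equivalently $\beta \to 0$, converts this power-law blow-up into the logarithm $|\ln h|$ and produces the correct critical exponent. I would also verify $u_h \in W^{1,p}(\Omega)$, i.e.\ continuity across $|x|=h$ and finiteness of the $p$-energy (exactly the capacity just computed), and confirm that the numerator's lower bound holds uniformly for all $0 < h \le 1/4$; both are routine once the radial profile is in hand.
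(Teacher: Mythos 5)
Your proposal is correct, and the overall strategy coincides with the paper's: choose radial functions that are constant on $D_h$ (so the measurement is exact and the numerator reduces to $\|u_h - \mathrm{const}\|_{L^p}$), bound the numerator below by integrating over the fixed outer annulus $1/2\le |x|\le 1$, and show the gradient energy decays at the sharp rate. The difference lies in the choice of radial profile. For $d=p$ your profile $\ln r/\ln h$ is, up to the reflection $u\mapsto 1-u$ and a harmless truncation near $r=h$, the same as the paper's $\max\{0,\ln(1+|x|/h)-\ln 2\}/\ln(1+1/h)$, and both yield $\|Du_h\|_{L^p}^p \lesssim |\ln h|^{1-d}$ and hence the refined exponent $(d-1)/d$. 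For $d>p$ the constructions genuinely differ: the paper uses the piecewise-linear ramp $\min\{\max\{|x|-h,0\}/h,1\}$, whose gradient is supported on the thin annulus $h<|x|<2h$ and whose energy $h^{d-p}$ is computed in one line, whereas you use the $p$-capacity extremal $(r^{-\beta}-1)/(h^{-\beta}-1)$ with $\beta=(d-p)/(p-1)$, which requires the exponent bookkeeping you describe but has the conceptual advantage of being the actual minimizer of the energy among radial functions with those boundary values (and of degenerating continuously into the logarithmic profile as $p\to d$, so both cases are handled by one family). Your uniformity checks for $h\le 1/4$ (e.g.\ $f(r)\le 1/2$ on $r\ge 1/2$ since $h^{-\beta}-1\ge 2^\beta+1$ times $2^\beta-1$, and $\ln 2/\ln(1/h)\le 1/2$) go through, so the bound holds non-asymptotically as required.
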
    
    Before we move to the second example, let us discuss the implication of the subsampled inequality for function approximation and recovery. Suppose we have the measurement data $\{(u)_{\omega^{h,H}_i}\}_{i \in I}$, then, following the same construction as in the introduction, we get the error bound of the piecewise constant recovery:
        \[ C(d,p)H\rho_{p,d}(\frac{H}{h})\|Du\|_{L^p(\Omega)} \, . \]
        Inspecting this formula, we see that if the ratio $H/h>0$ is fixed, then the error still achieves the $O(H)$ rate for functions in the space $W^{1,p}(\Omega)$. If $p\leq d$, then taking $h \to 0$, the error bound will blow up. This is due to the fact that the Sobolev embedding theorem fails to embed $W^{1,p}(\Omega)$ to the functional space consisting of continuous functions.
    \subsubsection{Sliced Data}    
    \label{sec: sliced data}
    As a second application, we consider the sliced version of the subsampled data and prove the corresponding Poincar\'e inequality, in the following Proposition \ref{example: sliced Poincare}; the proof is deferred to Subsection \ref{subsec: proof sliced poincare}.
    \begin{proposition}[Subsampled Poincar\'e inequality with sliced data]
        \label{example: sliced Poincare}
        Consider a bounded convex domain $\Omega \subset \bR^d$ and a hyperplane $\Gamma \subset \overline{\Omega}$ with dimension $d-1$. Let $\mu_{d-1}(\Gamma)=h^{d-1}$, and suppose that for every hyperplane contained in $\Omega$ that is parallel to $\Gamma$, its $d-1$ dimensional Hausdorff measure is bounded by $H^{d-1}$. Then for any $1<p <\infty$ and $u \in W^{1,p}(\Omega)$, the following inequality holds:
        \[\|u-\frac{1}{h^{d-1}}\int_\Gamma u \|_{L^p(\Omega)} \leq C(d,p) \mathrm{diam}(\Omega) \tilde{\rho}_{p,d}(\frac{H}{h}) \|Du\|_{L^p(\Omega)}\, , \]
        where
        \begin{equation*}
    \tilde{\rho}_{p,d}(x) =\left\{
    \begin{aligned}
    &1, &d < p \\
    &\ln (x+1), &d=p  \\
    &x^{\frac{d-p}{p}} &d> p \, .
    \end{aligned}
    \right.
    \end{equation*}
        and $C(d,p)$ is a constant that depends on $d$ and $p$ only.
    \end{proposition}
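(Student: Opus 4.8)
The plan is to invoke the general result of Theorem~\ref{thm: general poincare for general p} with the measure defined by $\lambda(A)=h^{-(d-1)}\mu_{d-1}(A\cap\Gamma)$, i.e. the normalized $(d-1)$-dimensional Hausdorff measure carried by the slice $\Gamma$. Since $\mu_{d-1}(\Gamma)=h^{d-1}$, this $\lambda$ is non-negative with unit mass on $\overline{\Omega}$, so Assumption~\ref{assumption: for general Poincare} is satisfied, and by construction $\int_{\overline{\Omega}}u\,\rd\lambda = h^{-(d-1)}\int_\Gamma u$. To apply the theorem I must also check that $u\mapsto\int_{\overline{\Omega}}u\,\rd\lambda$ is a bounded linear functional on $W^{1,p}(\Omega)$; this follows from the interior trace inequality $\|u\|_{L^p(\Gamma)}\le C\|u\|_{W^{1,p}(\Omega)}$ combined with H\"older's inequality $|\int_\Gamma u|\le \mu_{d-1}(\Gamma)^{1-1/p}\|u\|_{L^p(\Gamma)}$. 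It then remains to verify Assumption~\ref{assumption: growth condition for the measure} with an explicit $\alpha(t)$ and to estimate the integral $\int_0^1 \alpha(t)^{1/p}t^{-d/p}\,\rd t$.

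The key geometric step is the bound on $\alpha(t)$. Writing $\frac{z-t\overline{\Omega}}{1-t}=\frac{z}{1-t}-\frac{t}{1-t}\overline{\Omega}$, this set is a translate of the dilation of $-\overline{\Omega}$ by the linear factor $\frac{t}{1-t}$. Under the affine map $w\mapsto \frac{z-(1-t)w}{t}$, whose linear part is the scalar $-\frac{1-t}{t}$ times the identity and hence sends $\Gamma$ onto a parallel hyperplane $\Gamma'$, the intersection $\frac{z-t\overline{\Omega}}{1-t}\cap\Gamma$ is carried into the slice $\overline{\Omega}\cap\Gamma'$ of $\overline{\Omega}$ by a hyperplane parallel to $\Gamma$. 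Since the inverse map scales $(d-1)$-volume by the factor $(t/(1-t))^{d-1}$ and every parallel slice of $\Omega$ has $\mu_{d-1}\le H^{d-1}$ by hypothesis, I obtain $\mu_{d-1}\!\big(\tfrac{z-t\overline{\Omega}}{1-t}\cap\Gamma\big)\le (t/(1-t))^{d-1}H^{d-1}$. Using in addition that $\lambda$ has total mass one, I may therefore take $\alpha(t)=\min\big\{1,(H/h)^{d-1}(t/(1-t))^{d-1}\big\}$.

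The final step is to estimate $\int_0^1 \alpha(t)^{1/p}t^{-d/p}\,\rd t$, splitting at the transition point $t_\ast=h/(H+h)$ where the two terms in the minimum cross. On $[t_\ast,1]$ one has $\alpha\equiv 1$ and integrates $t^{-d/p}$, contributing a constant when $d<p$, a factor $\ln(H/h+1)$ when $d=p$, and $(H/h)^{(d-p)/p}$ when $d>p$. On $[0,t_\ast]$, where $1-t\asymp 1$, the integrand behaves like $(H/h)^{(d-1)/p}t^{-1/p}$, whose integral is $\asymp (H/h)^{(d-p)/p}$; this piece converges precisely because $p>1$, which is exactly the hypothesis of the proposition. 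Assembling the three regimes reproduces the rate $\tilde{\rho}_{p,d}(H/h)$, and substituting into Theorem~\ref{thm: general poincare for general p} yields the claim. The computation parallels the proof of Proposition~\ref{example: subsample Poincare} in Subsection~\ref{subsec: proof subsampled poincare} with the exponent $d$ replaced by $d-1$; this shift of exponent is also what turns the harmless $p=1$ endpoint there into a divergent one here.

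I expect the main obstacle to be the geometric slice estimate above: one must argue carefully that intersecting the dilated reflected copy of $\overline{\Omega}$ with $\Gamma$ produces, up to the scaling factor $(t/(1-t))^{d-1}$, a genuine parallel slice of $\overline{\Omega}$ so that the uniform bound $H^{d-1}$ applies, and one must confirm that the blow-up of $t/(1-t)$ as $t\to 1$ is harmless because $\alpha$ is capped at $1$ there.
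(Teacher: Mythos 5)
Your proposal is correct and follows essentially the same route as the paper's proof: you feed the normalized surface measure on $\Gamma$ into Theorem~\ref{thm: general poincare for general p}, take $\alpha(t)=\min\{1,(H/h)^{d-1}(t/(1-t))^{d-1}\}$ via the parallel-slice bound, and split the integral at $t=h/(H+h)$ exactly as in \eqref{eqn: coef of Poincare, sliced}. Your extra details --- the trace-theorem check that $u\mapsto h^{-(d-1)}\int_\Gamma u$ is bounded on $W^{1,p}(\Omega)$, the explicit affine-map argument for the slice estimate, and the remark that $p>1$ is needed precisely so that $\int_0 t^{-1/p}\,\rd t$ converges --- are points the paper leaves implicit but are consistent with its computation.
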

    Regarding the optimality of this rate with respect to $h$, we have the following proposition:
    \begin{proposition}[Optimality of the rate, the sliced data case]
    \label{prop: optimal rate demon, sliced case}
        Let $\Omega=B^d(0,1)$ be the ball centered at $0$ with radius $1$, and $D_h$ is a ball in the $(d-1)$-dimensional hyperplane $\{x\in \bR^d: x_d=0\}$, with center $0$ and radius $h$, which satisfies $0< h\leq 1/4$. Then, for $d \geq p$, there exists a constant $C(d,p)$ that depends on $d$ and $p$ only, such that we can find a sequence of functions $u_h \in W^{1,p}(\Omega)$ that satisfy
        \[\frac{\|u_h-\frac{1}{\mu_{d-1}(D_h)}\int_{D_h}u_h\|_{L^p(\Omega)}}{\|Du_h\|_{L^p(\Omega)}}\geq C(d,p)\rho_{p,d}(\frac{1}{h})\, , \]
        for any $0< h\leq 1/4$.
    \end{proposition}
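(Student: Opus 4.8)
The plan is to construct explicit radial extremizers and reduce the sliced statement to the full-dimensional optimality already recorded in Proposition \ref{prop: sharpness of the rate}. The key structural observation is that the functions saturating the subsampled inequality are (truncated) $p$-capacity potentials of the ball $B^d(0,h)$ inside $\Omega=B^d(0,1)$, and every such potential is \emph{constant} on $\overline{B^d(0,h)}$. Since the sliced measurement set lies inside this ball, $D_h\subset\overline{B^d(0,h)}$ (each $x\in D_h$ has $|x|\le h$), the $(d-1)$-dimensional average $\frac{1}{\mu_{d-1}(D_h)}\int_{D_h}u_h$ equals the constant value of $u_h$ on $\overline{B^d(0,h)}$, which is exactly the solid average appearing in the full-dimensional case. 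Thus both the numerator and the denominator of the test ratio coincide with those of Proposition \ref{prop: sharpness of the rate}, and the desired lower bound is inherited.

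Concretely I would take $u_h(x)=\phi_h(|x|)$ with the truncated $p$-harmonic profile
\[
\phi_h(r)=\begin{cases} h^{-\beta}, & 0\le r\le h,\\ r^{-\beta}, & h\le r\le 1,\end{cases}\qquad \beta=\frac{d-p}{p-1}\quad(d>p),
\]
and, in the critical case $d=p$ where $\beta=0$,
\[
\phi_h(r)=\begin{cases}\ln(1/h), & 0\le r\le h,\\ \ln(1/r), & h\le r\le 1.\end{cases}
\]
Writing $c_h=h^{-\beta}$ (resp. $c_h=\ln(1/h)$) for the common constant value on $\overline{B^d(0,h)}$, the average over $D_h$ equals $c_h$ with no computation on the $(d-1)$-dimensional disk needed.

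The two remaining estimates are elementary. For the numerator I would restrict to the bulk annulus $\tfrac12\le|x|\le1$, where $u_h=|x|^{-\beta}$ is of order one while $c_h\gg1$, giving $\|u_h-c_h\|_{L^p(\Omega)}\ge c(d,p)\,c_h$. For the denominator I would compute in polar coordinates $\|Du_h\|_{L^p(\Omega)}^p=C(d)\beta^p\int_h^1 r^{d-1-(\beta+1)p}\,\rd r$; for $d>p$ the exponent equals $-(d-1)/(p-1)<-1$, so the integral is of order $h^{-\beta}$ and $\|Du_h\|_{L^p(\Omega)}\le C(d,p)h^{-\beta/p}$, while for $d=p$ the integral is $\int_h^1 r^{-1}\,\rd r=\ln(1/h)$, so $\|Du_h\|_{L^p(\Omega)}\le C(d,p)(\ln(1/h))^{1/p}$. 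Taking the quotient yields $h^{-\beta(1-1/p)}=h^{-(d-p)/p}$ for $d>p$ and $(\ln(1/h))^{1-1/p}=(\ln(1/h))^{(d-1)/d}$ for $d=p$, which is precisely $\rho_{p,d}(1/h)$.

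The main obstacle is the correct choice of the profile exponent. Using the $p$-harmonic (capacity) exponent $\beta=(d-p)/(p-1)$ rather than the naive $(d-p)/p$ is essential: the latter truncated power drives the gradient integral into the critical regime $\int_h^1 r^{-1}\,\rd r=\ln(1/h)$ and loses a factor $(\ln(1/h))^{1/p}$, which would fall short of the pure power $\rho_{p,d}(1/h)=(1/h)^{(d-p)/p}$ claimed for $d>p$. Once this exponent is fixed, the only genuinely new ingredient beyond Proposition \ref{prop: sharpness of the rate} is the observation that the extremizers are constant on $\overline{B^d(0,h)}\supset D_h$, which collapses the sliced average onto the solid one; the remaining steps are the same calculus computations, and $u_h\in W^{1,p}(\Omega)$ holds for each fixed $h$ precisely in the regime $p>1$, $d\ge p$, where the $(d-1)$-disk carries positive $p$-capacity.
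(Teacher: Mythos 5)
Your proof is correct and takes essentially the same route as the paper: the paper's proof of this proposition is a one-line reduction to Proposition \ref{prop: sharpness of the rate}, reusing the exact same radial test functions, which works precisely because those functions are constant on $B^d(0,h)\supset D_h$ so the sliced average collapses to the solid one --- the observation you make explicit. The only difference is cosmetic: you substitute truncated $p$-capacity profiles ($r^{-(d-p)/(p-1)}$, resp.\ $\ln(1/r)$) for the paper's compactly supported ramp and normalized logarithm, and your computations with these alternatives check out.
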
    
    By the proposition, our rate is optimal for $d\neq p$ case, while there is still a logarithmic gap in the critical $d=p$ case. It may be possible to improve the rate from $\tilde{\rho}_{p,d}$ to $\rho_{p,d}$ using the technique of Orlicz's space in \cite{ruiz_note_2012}.
    
    We make several remarks for the sliced data case below. 
    \begin{enumerate}
        \item Similar to the subsampled case, if we use the sliced data to make the piecewise constant recovery, the error bound is given by
        $C(d,p)H\tilde{\rho}_{p,d}(H/h)\|Du\|_{L^p(\Omega)}$.
        \item In the sliced data case, we have the measurement functional supported on a hyperplane with co-dimension $1$. One may wonder whether the above proposition can be extended to measurement data that is integration against a set with co-dimension higher than $1$. For that case, we can still use Theorem \ref{thm: general poincare for general p} since it works for general measurement functional $\int_{\overline{\Omega}} u \, \rd \lambda$. Following similar calculations as those in the proof of Propositions \ref{example: subsample Poincare} and \ref{example: sliced Poincare}, it is possible to get the corresponding Poincar\'e inequality. Nevertheless, the optimality of the rate may require more delicate discussions, especially for the critical case.
        
        Moreover, to have the Poincar\'e inequality, we need the assumption that $u \to \int_{\overline{\Omega}} u\, \rd \lambda$ is a bounded linear functional on the function space $W^{1,p}(\Omega)$. Thus, we may not allow the measurement data to be an integration against a set of very low dimensions if $p$ is not large enough. According to the trace theorem, the co-dimension $m$ of the set should satisfy $1-m/p>0$ here, so the dimension of the set needs to be strictly larger than $d-p$.
    \end{enumerate}
    Overall, the two applications in this subsection demonstrate the usefulness of Theorem \ref{thm: general poincare for general p}. It is of future interest to find more applications where Theorem \ref{thm: general poincare for general p} can lead to optimal scaling rate concerning some parameters of interest and to improve the inequality in the critical case for the rate regarding the small scale parameter $h$.

    \section{Improved Multiscale Basis Functions} 
    In the last section, we have discussed the subsampled Poincar\'e inequality and its implication to problems of function approximation and recovery. The discussion is mainly focused on how to establish the related inequality; its application to function recovery is limited to constant basis functions. In this section, we consider an improvement on the regularity of the basis functions for the case $p=2$. The generalization to $p \neq 2$ is left for future research.
    \label{sec: improving the basis function}
    \subsection{Construction of Basis Functions} Our strategy is to borrow ideas in variational splines and the recent progress in numerical homogenization for constructing multiscale basis functions \cite{owhadi2019operator}. We begin with some definitions that will become useful.
    \subsubsection{Domains, Operators and Norms}
    As before, we consider $\Omega=[0,1]^d$, and its decomposition into cubes follows the same setting; the reader can look at Figure \ref{fig:two-scalegrid} for the setup of the problem. 
    
    We introduce the notation $\cL=-\nabla \cdot (a \nabla \cdot)$; it is an elliptic operator with homogeneous Dirichlet boundary condition. The coefficient $a:\Omega \to \bR$ is assumed to satisfy $0 < a_{\text{min}} \leq a(x) \leq a_{\text{max}}<\infty$ for all $x \in \Omega$.
    
     Given the coefficient function $a$, we define the associated energy norm for any $u \in H_0^1(\Omega)$  by 
     \[\|u\|^2_{H_a^1(\Omega)}:=\int_\Omega a(x)|\nabla u(x)|^2\, \rd x\, ;\] furthermore, the induced inner product is denoted by $\left<\cdot,\cdot\right>_a$ such that for $u,v \in H_0^1(\Omega)$, it holds
    \[\left<u,v\right>_a=\int_\Omega a(x)\nabla u(x) \cdot \nabla v(x) \, \rd x\, .\]
    Recall in our introduction, we assume the function to be recovered satisfies $\|Du\|_{L^p(\Omega)}\leq M$. In this section, we will assume $\|u\|_{H_a^1(\Omega)}<\infty$, or additionally, $\|\cL u\|_{L^2(\Omega)}<\infty$, to study how the regularity of the basis functions can influence the accuracy of the recovery, given different assumptions on $u$.
    \subsubsection{Measurement Functions and Basis Functions} First, we introduce a notation for describing the subsampled data. We write the subsampled measurement functions by $\{\phi^{h,H}_i \}_{i \in I}$ where each $\phi^{h,H}_i$ is an indicator function of the patch $\omega_i^{h,H}$, normalized to have unit $L^1$ norm. Under this context, we can write the subsampled data \[(u)_{\omega_i^{h,H}}=[u,\phi^{h,H}_i]\, ,\] where $[\cdot,\cdot]$ is the $L^2$ inner product. Therefore, the problem becomes recovering $u$ from the data $[u,\phi^{h,H}_i], 1\leq i \leq I$. The piecewise constant recovery can be writted in the following form:
    \[u^{\text{pc}}=\sum_{i \in I} [u,\phi_{i}^{h,H}]\varphi_{i}^{h,H}\, , \]
    where each $\varphi_{i}^{h,H}$ is the basis function being constant $1$ supported in $\omega_i^{H}$.
    
    Now, we consider the improved multiscale basis functions, denoted by $\{\psi^{h,H}_i\}_{i\in I}$, which solve the following optimization problem:
    \begin{equation}
    \label{eqn: optimization def basis}
    \begin{aligned}
    \psi_{i}^{h,H} = \text{argmin}_{\psi \in H_0^1(\Omega)}\quad  &\|\psi\|_{H_a^1(\Omega)}^2 \\
     \text{subject to}\quad &[\psi, \phi_j^{h,H}] = \delta_{i,j}\ \  \text{for}\ \  j \in I \, ,
 \end{aligned}
    \end{equation}
    where $\delta_{i,j}=1$ if $i=j$, and has value $0$ otherwise. We use the term ``multiscale basis'' here because the energy norm is involved in the optimization; the multiscale behavior of $a$ is transfered to the basis functions. If $a$ is oscillatory, then $\psi_i^{h,H}$ will have a similar oscillation.
    
    With these basis functions, the recovered function is constructed by
    \[u^{h,H}=\sum_{i \in I} [u,\phi_{i}^{h,H}]\psi_{i}^{h,H}\, . \]
    According to \cite{owhadi2019operator}, this recovery is minimax optimal in the relative energy norm, given the data $[u,\phi_i^{h,H}]$ for $i \in I$. Through a Bayesian perspective, it can also be understood as the mean of the Gaussian process $\xi\sim \cN(0, \cL^{-1})$, conditioned on the observation data $[\xi,\phi_i^{h,H}]=[u,\phi_i^{h,H}]$ for $i \in I$.
    \subsubsection{Property of The Improved Basis Functions}
    We mention two properties of the improved basis function, which would be helpful for understanding its implications. For more discussions we refer to the book \cite{owhadi2019operator}. 
    
    The first property is the relation $\text{span}_{i \in I}~\{\psi^{h,H}_i\}=\text{span}_{i \in I}~\{\cL^{-1}\phi^{h,H}_i\}$, so that $\psi_i^{h,H}$ is given by a linear combination of $\cL^{-1} \phi^{h,H}_j$ for $j \in I$; see the following Proposition \ref{prop: explicit form of spline}. In this sense, the regularity of basis functions is improved by applying the inverse of an differential operator, here being $\cL$, to the measurement functions $\{\phi^{h,H}_i\}_{i \in I}$.
    
    We use the notation that $|I|$ is the cardinality of the index set $I$.
    \begin{proposition}
        \label{prop: explicit form of spline}
        For each $i \in I$, the basis function $\psi_i^{h,H}$ has the form
        \[\psi_i^{h,H}=\sum_{j\in I} \Theta_{i,j}^{-1}\cL^{-1}\phi_j^{h,H} \, , \]
        where $\Theta \in \bR^{|I|\times |I|}$ with entries $\Theta_{i,j}=[\phi_j^{h,H},\cL^{-1}\phi_i^{h,H}]$ and $\Theta^{-1}$ is the inverse of $\Theta$.
    \end{proposition}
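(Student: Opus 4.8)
The plan is to recognize the optimization problem \eqref{eqn: optimization def basis} as a minimum-norm problem in the Hilbert space $(H_0^1(\Omega),\langle\cdot,\cdot\rangle_a)$ and to exploit the Riesz representation of the constraint functionals. First I would observe that for each $j \in I$ the map $v \mapsto [v,\phi_j^{h,H}]$ is a bounded linear functional on $H_0^1(\Omega)$, since $\phi_j^{h,H} \in L^2(\Omega) \subset H^{-1}(\Omega)$. Its Riesz representer with respect to the energy inner product is precisely $\cL^{-1}\phi_j^{h,H}$: indeed, for any $v \in H_0^1(\Omega)$, integration by parts gives $\langle \cL^{-1}\phi_j^{h,H}, v\rangle_a = \int_\Omega a\,\nabla(\cL^{-1}\phi_j^{h,H})\cdot\nabla v\,\rd x = [\phi_j^{h,H}, v]$, which is the desired identity.

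Next I would invoke the standard variational characterization of the minimizer over an affine subspace. The feasible set in \eqref{eqn: optimization def basis} is $\psi_0 + V$, where $V = \{v \in H_0^1(\Omega) : [v,\phi_j^{h,H}] = 0 \text{ for all } j \in I\}$ is a closed subspace and $\psi_0$ is any feasible function. Since the objective is the squared energy norm, the projection theorem gives a unique minimizer $\psi_i^{h,H}$, characterized as the unique feasible element orthogonal (in $\langle\cdot,\cdot\rangle_a$) to $V$. But $V$ is exactly the intersection of the kernels of the functionals whose representers are $\cL^{-1}\phi_j^{h,H}$, so its orthogonal complement is $\text{span}_{j\in I}\{\cL^{-1}\phi_j^{h,H}\}$. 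Hence $\psi_i^{h,H} = \sum_{j\in I} c_{i,j}\,\cL^{-1}\phi_j^{h,H}$ for some coefficients $c_{i,j}$; this already yields the span identity stated before the proposition.

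Finally I would pin down the coefficients by imposing the interpolation constraints. Substituting the expansion into $[\psi_i^{h,H},\phi_k^{h,H}] = \delta_{i,k}$ and using the self-adjointness of $\cL^{-1}$, so that $[\cL^{-1}\phi_j^{h,H},\phi_k^{h,H}] = [\phi_j^{h,H},\cL^{-1}\phi_k^{h,H}] = \Theta_{k,j} = \Theta_{j,k}$, I obtain the linear system $\sum_{j\in I} c_{i,j}\Theta_{j,k} = \delta_{i,k}$, i.e. $C\Theta = \mathrm{Id}$ in matrix form, so $c_{i,j} = \Theta^{-1}_{i,j}$. The main point requiring care is the invertibility of $\Theta$: it is the Gram matrix of the family $\{\cL^{-1}\phi_j^{h,H}\}_{j\in I}$ with respect to $\langle\cdot,\cdot\rangle_a$, hence symmetric positive semidefinite, and it is in fact positive definite because this family is linearly independent. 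Independence follows since the $\phi_j^{h,H}$ have disjoint supports, being normalized indicators of the disjoint patches $\omega_j^{h,H}$, and $\cL^{-1}$ is injective; this also guarantees feasibility of \eqref{eqn: optimization def basis}. I expect the orthogonality-based reduction to the span and the invertibility of $\Theta$ to be the only nontrivial steps, with the remainder being bookkeeping.
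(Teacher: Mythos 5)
Your proof is correct and follows essentially the route the paper intends: the paper does not write out a proof but defers to Theorem 3.1 of the Gamblets paper and to a Lagrange-multiplier argument, both of which amount to exactly your observation that $\cL^{-1}\phi_j^{h,H}$ is the Riesz representer of $v\mapsto[v,\phi_j^{h,H}]$ in $\langle\cdot,\cdot\rangle_a$, so the minimizer lies in $\mathrm{span}_{j\in I}\{\cL^{-1}\phi_j^{h,H}\}$ and the coefficients are fixed by inverting the Gram matrix $\Theta$. Your additional care about the invertibility of $\Theta$ (linear independence from the disjoint supports of the $\phi_j^{h,H}$ and injectivity of $\cL^{-1}$) and about feasibility is exactly the detail the paper leaves implicit, and it is handled correctly.
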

    The proof of the above proposition follows the same strategy as that of proving Theorem 3.1 in \cite{owhadi_multigrid_2017}; one can also easily understand the result by using Lagrange multipliers for the constrained optimization problem \eqref{eqn: optimization def basis}.
    
    The second property is about the Galerkin orthogonality of the recovered function $u^{h,H}$; see Proposition \ref{prop: u h, H is projection}.
    \begin{proposition}
        \label{prop: u h, H is projection}
        The function $u^{h,H}$ is the projection of $u$ into the function space spanned by $\{\psi^{h,H}_i \}_{i \in I}$, under the inner product $\left<\cdot,\cdot\right>_a$.
    \end{proposition}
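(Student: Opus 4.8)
The plan is to use the variational characterization of orthogonal projection: a function $Pu \in V := \mathrm{span}_{i \in I}\{\psi_i^{h,H}\}$ equals the $\langle\cdot,\cdot\rangle_a$-projection of $u$ onto $V$ if and only if $Pu \in V$ and the residual $u - Pu$ is $\langle\cdot,\cdot\rangle_a$-orthogonal to every element of $V$. Since $u^{h,H}$ is by construction a linear combination of the $\psi_i^{h,H}$, and each $\psi_i^{h,H}\in H_0^1(\Omega)$, the membership $u^{h,H}\in V$ is immediate. Thus the entire content of the proposition reduces to verifying the orthogonality relation $\langle u - u^{h,H}, v\rangle_a = 0$ for all $v \in V$, where throughout we use the standing assumption $\|u\|_{H_a^1(\Omega)}<\infty$, i.e. $u \in H_0^1(\Omega)$, so that $\langle u,\cdot\rangle_a$ is well defined.

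First I would invoke the first property recorded with Proposition \ref{prop: explicit form of spline}, namely $V=\mathrm{span}_{i\in I}\{\cL^{-1}\phi_i^{h,H}\}$, so that it suffices to test orthogonality against the functions $\cL^{-1}\phi_k^{h,H}$, $k\in I$. The workhorse is the weak-form identity
\[
\langle w, \cL^{-1}\phi_k^{h,H}\rangle_a = [w,\phi_k^{h,H}] \qquad \text{for all } w \in H_0^1(\Omega),
\]
which is nothing but the variational formulation defining $\cL^{-1}\phi_k^{h,H}$ as the $H_0^1$ weak solution of $\cL v=\phi_k^{h,H}$ with homogeneous Dirichlet condition (equivalently, integration by parts against $\cL=-\nabla\cdot(a\nabla\cdot)$, with the boundary terms vanishing since $w\in H_0^1$). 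With this identity the computation closes quickly: applying it with $w=u$ gives $\langle u, \cL^{-1}\phi_k^{h,H}\rangle_a = [u,\phi_k^{h,H}]$, while applying it with $w=u^{h,H}$ and then using the interpolation constraint $[\psi_i^{h,H},\phi_k^{h,H}]=\delta_{i,k}$ from \eqref{eqn: optimization def basis} gives
\[
\langle u^{h,H}, \cL^{-1}\phi_k^{h,H}\rangle_a = [u^{h,H},\phi_k^{h,H}] = \sum_{i\in I}[u,\phi_i^{h,H}]\,[\psi_i^{h,H},\phi_k^{h,H}] = [u,\phi_k^{h,H}].
\]
Subtracting yields $\langle u-u^{h,H},\cL^{-1}\phi_k^{h,H}\rangle_a=0$ for every $k\in I$, hence the residual is orthogonal to all of $V$, which is precisely the claim.

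The only genuine subtlety, rather than a true obstacle, is the regularity bookkeeping surrounding the weak-form identity: one must read $\cL^{-1}\phi_k^{h,H}$ as the $H_0^1$ weak solution, for which the displayed identity is a tautology of the variational formulation and requires no extra smoothness or classical integration by parts. Once this convention is fixed the argument is purely algebraic and no quantitative estimate is needed. For readers who prefer not to invoke the span equality from Proposition \ref{prop: explicit form of spline}, an equivalent route is to test directly against each $\psi_k^{h,H}$: substituting the explicit representation $\psi_k^{h,H}=\sum_{j}\Theta_{k,j}^{-1}\cL^{-1}\phi_j^{h,H}$ into the same weak-form identity, together with the symmetry of $\Theta$, gives $\langle\psi_i^{h,H},\psi_k^{h,H}\rangle_a=\Theta_{i,k}^{-1}$ and produces the identical cancellation.
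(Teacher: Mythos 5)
Your proposal is correct and follows essentially the same route as the paper: reduce to orthogonality of the residual, pass through the span identity $\mathrm{span}\{\psi_i^{h,H}\}=\mathrm{span}\{\cL^{-1}\phi_i^{h,H}\}$ and the weak-form identity $\langle w,\cL^{-1}\phi_k^{h,H}\rangle_a=[w,\phi_k^{h,H}]$, and close with the interpolation constraints $[\psi_i^{h,H},\phi_k^{h,H}]=\delta_{i,k}$. The only difference is cosmetic (you test against $\cL^{-1}\phi_k^{h,H}$ where the paper tests against $\psi_i^{h,H}$ and then converts), and your explicit statement of the weak-form identity makes precise a step the paper leaves implicit.
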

    \begin{proof}
        It suffices to show $u-u^{h,H}$ is orthogonal to $\psi_i^{h,H}$ for any $i \in I$ under the inner product $\left<\cdot,\cdot\right>_a$. Equivalently, we need to show \[\left<u-u^{h,H},\psi_i^{h,H}\right>_a=0\, .\] Since $\psi^{h,H}_i \in \text{span}_{i \in I}~\{\cL^{-1} \phi^{h,H}_i \}$, this is equivalent to $[u-u^{h,H},\phi^{h,H}_i]=0$. Observing that
        \[ [u-u^{h,H},\phi^{h,H}_i]=[u,\phi^{h,H}_i]-\sum_{j \in I} [u,\phi^{h,H}_j][\phi^{h,H}_i,\psi^{h,H}_j]=0 \, ,  \]
        by the definition of $u^{h,H}$ and $\psi_i^{h,H}$, we complete the proof.
    \end{proof}
    With these two useful properties, we move to study the accuracy of the recovery $u^{h,H}$ in the next section.
    \subsection{Error Estimates Adapted to Regularity}
    In this section, we derive the approximation accuracy of the above recovery. We discuss two assumptions on $u$: (1) $u \in H_0^1(\Omega)$, which corresponds to the setting in the piecewise constant recovery before, i.e., we have the bounded norm for the gradient; (2) we further have the information $\cL u \in L^2(\Omega)$; this is an improved regularity assumption on $u$. We can readily see the improvement if we set $a$ to be a constant function with value $1$, in which case $\cL$ becomes the negative Laplacian operator. Then, $\cL u \in L^2(\Omega)$ implies $u \in H^2(\Omega)$, an improved regularity for $u$. 
    
    We encompass the discussion of general $\cL$ here, as it is of interest in multiscale elliptic PDEs, where the conductivity field $a$ can exhibit strong heterogeneity. In the following, Theorem \ref{thm:approximation error, subsampled spline} shows the error estimate adapted to the regularity of $u$; its proof relies on the subsampled Poincar\'e inequality that we have established in Section \ref{sec: general poincare}. 
    \begin{theorem}
        \label{thm:approximation error, subsampled spline}
        Under the assumption that $u \in H_0^1(\Omega)$, we have the following error estimate:
        \begin{align*}
        & \|u-u^{h,H}\|_{H^1_a(\Omega)} \leq \|u\|_{H_a^1(\Omega)} \, ,\\
        & \|u-u^{h,H} \|_{L^2(\Omega)} \leq \frac{1}{\sqrt{a_{\min}}}C(d)H\rho_{2,d}(\frac{H}{h})\|u\|_{H_a^1(\Omega)}\, ,
        \end{align*}
        where $C(d)$ is a constant that depends on $d$ only.
        
        Furthermore, under the additional assumption that $\cL u \in L^2(\Omega)$, we have the improved  $H_a^1(\Omega)$ estimate:
        \[ \|u-u^{h,H}\|_{H^1_a(\Omega)} \leq \frac{1}{\sqrt{a_{\min}}}C(d)H\rho_{2,d}(\frac{H}{h})\|\cL u\|_{L^2(\Omega)} \, , \]
        and the improved $L^2(\Omega)$ estimate:
        \[\|u-u^{h,H} \|_{L^2(\Omega)} \leq \frac{1}{a_{\min}}C(d)^2H^2\rho_{2,d}(\frac{H}{h})^2\|\cL u\|_{L^2(\Omega)}\, .\]
    \end{theorem}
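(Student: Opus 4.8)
The plan is to exploit two structural facts established earlier: that $u^{h,H}$ is the $\langle\cdot,\cdot\rangle_a$-orthogonal projection of $u$ onto $V:=\mathrm{span}_{i\in I}\{\psi_i^{h,H}\}$ (Proposition \ref{prop: u h, H is projection}), and that the residual $e:=u-u^{h,H}$ satisfies the $L^2$-orthogonality $[e,\phi_i^{h,H}]=0$ for every $i\in I$, which was verified inside the proof of that proposition. The first estimate is then immediate: since $u^{h,H}\in V$ and $e$ is $\langle\cdot,\cdot\rangle_a$-orthogonal to $V$, the Pythagorean identity $\|u\|_{H_a^1(\Omega)}^2=\|u^{h,H}\|_{H_a^1(\Omega)}^2+\|e\|_{H_a^1(\Omega)}^2$ gives $\|e\|_{H_a^1(\Omega)}\le\|u\|_{H_a^1(\Omega)}$.

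The engine of the remaining estimates is a localized subsampled Poincar\'e bound. The condition $[e,\phi_i^{h,H}]=0$ says exactly that the subsampled average $\frac{1}{h^d}\int_{\omega_i^{h,H}}e\,\rd x$ vanishes for each $i$. I would therefore apply Proposition \ref{example: subsample Poincare} with $p=2$ on each patch $\omega_i^H$, a cube of side $H$ with $\mathrm{diam}(\omega_i^H)\sim H$ and subsampled set $\omega_i^{h,H}$ of volume $h^d$, obtaining $\|e\|_{L^2(\omega_i^H)}\le C(d)H\rho_{2,d}(H/h)\|De\|_{L^2(\omega_i^H)}$. Squaring and summing over $i\in I$ yields the global bound
\[
\|e\|_{L^2(\Omega)}\le C(d)H\rho_{2,d}(H/h)\|De\|_{L^2(\Omega)}.
\]
Combining this with the pointwise lower bound $a\ge a_{\min}$, which gives $\|De\|_{L^2(\Omega)}\le a_{\min}^{-1/2}\|e\|_{H_a^1(\Omega)}$, and then with the first estimate $\|e\|_{H_a^1(\Omega)}\le\|u\|_{H_a^1(\Omega)}$, produces the second (unconditional) $L^2$ estimate.

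For the two estimates under the additional hypothesis $\cL u\in L^2(\Omega)$, I would use the variational identity coming from Galerkin orthogonality in the energy inner product. Writing $\|e\|_{H_a^1(\Omega)}^2=\langle e,u-u^{h,H}\rangle_a=\langle e,u\rangle_a$ (the term $\langle e,u^{h,H}\rangle_a$ vanishes because $u^{h,H}\in V$), and integrating by parts using $e\in H_0^1(\Omega)$ to discard the boundary contribution, I obtain $\|e\|_{H_a^1(\Omega)}^2=\int_\Omega(\cL u)\,e\,\rd x=[\cL u,e]$. Cauchy--Schwarz together with the localized bound above then gives
\[
\|e\|_{H_a^1(\Omega)}^2\le\|\cL u\|_{L^2(\Omega)}\|e\|_{L^2(\Omega)}\le a_{\min}^{-1/2}C(d)H\rho_{2,d}(H/h)\|\cL u\|_{L^2(\Omega)}\|e\|_{H_a^1(\Omega)},
\]
and dividing by $\|e\|_{H_a^1(\Omega)}$ yields the improved energy estimate. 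Substituting this bound back into the localized $L^2$-versus-energy inequality one more time gives the improved $L^2$ estimate, with the squared factor $H^2\rho_{2,d}(H/h)^2$ and the prefactor $a_{\min}^{-1}$.

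The arguments are short; the step that requires care is the passage from the single-patch subsampled Poincar\'e inequality to the global estimate. One must check that the hypotheses of Proposition \ref{example: subsample Poincare} apply verbatim on each $\omega_i^H$: that $\omega_i^H$ is convex with diameter comparable to $H$, that the subsampled region has volume $h^d$ so the ratio feeding $\rho_{2,d}$ is $H/h$, and that the vanishing of $[e,\phi_i^{h,H}]$ is precisely the mean-zero condition the inequality requires. One must also confirm that the constant $C(d)$ surviving the summation is uniform over the patches, which holds since all patches are translates and dilates of a fixed reference cube. The only genuine hypothesis invoked beyond $u\in H_0^1(\Omega)$ is $\cL u\in L^2(\Omega)$, which is needed to justify the integration by parts; since both $u$ and $u^{h,H}$ lie in $H_0^1(\Omega)$, the boundary terms vanish and the identity $\|e\|_{H_a^1(\Omega)}^2=[\cL u,e]$ is legitimate.
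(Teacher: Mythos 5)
Your proof is correct, but it takes a genuinely different route from the paper's for everything beyond the first (trivial projection) estimate. The paper proves the $L^2$ bound by an Aubin--Nitsche duality argument: it introduces an auxiliary function $w$ with $\cL w=u-u^{h,H}$, writes $\|u-u^{h,H}\|_{L^2(\Omega)}^2=\left<u-u^{h,H},w-w^{h,H}\right>_a$ via Galerkin orthogonality, and bounds the best-approximation error of $w$ in $\mathrm{span}\{\cL^{-1}\phi_i^{h,H}\}$ by a patchwise computation that feeds the subsampled Poincar\'e inequality; the improved energy estimate is then obtained by rerunning that same best-approximation computation on $u$ itself. You instead observe that the residual $e=u-u^{h,H}$ already satisfies $[e,\phi_i^{h,H}]=0$ (which is indeed verified inside the proof of Proposition \ref{prop: u h, H is projection}), so Proposition \ref{example: subsample Poincare} applies \emph{directly} to $e$ on each patch and yields $\|e\|_{L^2(\Omega)}\leq C(d)H\rho_{2,d}(H/h)\|De\|_{L^2(\Omega)}$ with no dual problem at all; the improved energy estimate then drops out of the weak-form identity $\|e\|_{H^1_a(\Omega)}^2=\left<e,u\right>_a=[\cL u,e]$, which is legitimate since $e\in H_0^1(\Omega)$ and $\cL u\in L^2(\Omega)$ is by definition the $L^2$ representative of the functional $v\mapsto\left<u,v\right>_a$. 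Your route is shorter and makes transparent that the whole theorem hinges on the single inequality $\|e\|_{L^2}\lesssim H\rho_{2,d}(H/h)\|De\|_{L^2}$; the paper's duality-plus-best-approximation machinery is heavier but does not rely on the residual satisfying the data constraints exactly, which matters once the basis functions are localized (as in the companion paper) and the orthogonality $[e,\phi_i^{h,H}]=0$ is only approximate. The points you flag as needing care (convexity and volume normalization of each $\omega_i^H$, uniformity of the patchwise constant) are exactly the right ones, and both arguments share the same cosmetic gap as the paper's own proof: Proposition \ref{example: subsample Poincare} is stated with $\tilde\rho_{2,d}$, and the sharpened $\rho_{2,d}$ in the critical case $d=2$ is imported from the literature.
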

    \begin{proof}
        We start the analysis for the case $u \in H_0^1(\Omega)$. The first estimate is readily true by using the property that $u^{h,H}$ is the projection of $u$ under the energy norm $H_a^1(\Omega)$. For the second inequality, we introduce a function $w$ such that $\cL w=u-u^{h,H}$. Then, it follows that
        \begin{equation}
        \label{eqn: v=u-u h H}
        \|u-u^{h,H}\|_{L^2(\Omega)}^2=[u-u^{h,H},u-u^{h,H}]=\left<u-u^{h,H},w\right>_a \, .
        \end{equation}
        Since $u-u^{h,H}$ is orthogonal to every $\psi^{h,H}_i$ under the inner product $\left<\cdot,\cdot\right>_a$ by Proposition \ref{prop: u h, H is projection}, we have
        \begin{equation}
        \label{eqn:proof of spline estimate 1}
        \begin{aligned}
        \left<u-u^{h,H},w\right>_a&=\left<u-u^{h,H},w-\sum_{i \in I} [w,\phi^{h,H}_i]\psi^{h,H}_i\right>_a \\ &\leq \|u-u^{h,H}\|_{H^1_a(\Omega)}\|w-\sum_{i \in I} [w,\phi^{h,H}_i]\psi^{h,H}_i\|_{H^1_a(\Omega)}\, .
        \end{aligned}
        \end{equation}
        Now, we estimate the second term in the above right-hand side. We can write $w^{h,H}=\sum_{i \in I} [w,\phi^{h,H}_i]\psi^{h,H}_i$. From the orthogonality of recovery (Proposition \ref{prop: u h, H is projection}), we get
        \begin{equation}
        \label{eqn: w variational form}
        \|w-w^{h,H}\|_{H^1_a(\Omega)} =\min_{ \{c_i\}_{i\in I}} \|w-\sum_{i \in I} c_i \cL^{-1} \phi^{h,H}_i \|_{H^1_a(\Omega)}\, . 
        \end{equation}
        Therefore, choosing specific $c_i$ yields an upper bound on this term. For ease of notation we write $v=u-u^{h,H}$, and here we choose \[c_i=\int_{\omega_i^H} v, \quad i\in I\, .\] For this choice, let $w_0=\sum_{i \in I} c_i \cL^{-1} \phi^{h,H}_i$, then we get
        \begin{equation}
        \label{eqn: w error estimate}
        \begin{aligned}
        \|w-w_0 \|_{H^1_a(\Omega)}^2 =& \int_{\Omega} (w-w_0)\cL(w-w_0) \\
        =&\int_{\Omega} (w-w_0)(v-\sum_{i \in I} c_i \phi^{h,H}_i)\\
        =&\sum_{i\in I} \int_{\omega_i^H} (w-w_0)(v- c_i \phi^{h,H}_i)\\
        =&\sum_{i \in I} \int_{\omega_i^H} \left(w-w_0-\int_{\omega_i^H} (w-w_0)\phi_i^{h,H}\right)v
        \end{aligned}
        \end{equation}
        where in the last equality we have substituted the formula of $c_i$ into the equation. Then, invoking the subsampled Poincar\'e inequality (recall that $\phi_i^{h,H}$ has unit $L^1$ norm), we get
        \begin{align*}
            \|w-w_0 \|_{H^1_a(\Omega)}^2\leq &\sum_{i \in I} C(d)H\rho_{2,d}(\frac{H}{h})\|D(w-w_0)\|_{L^2(\omega_i^H)}\|v\|_{L^2(\omega_i^H)}\\
        \leq & \frac{1}{\sqrt{a_{\text{min}}}}C(d)H\rho_{2,d}(\frac{H}{h})\|w-w_0\|_{H_a^1(\Omega)}\|v\|_{L^2(\Omega)}\, .
        \end{align*}
        Finally, by using \eqref{eqn: w variational form} and the above estimate, it yields that
        \begin{equation}
        \label{eqn:proof of spline estimate 2}
        \|w-w^{h,H}\|_{H^1_a(\Omega)} \leq \|w-w_0\|_{H^1_a(\Omega)}\leq C(d)\frac{1}{\sqrt{a_{\text{min}}}} H\rho_{2,d}(\frac{H}{h})\|v\|_{L^2(\Omega)}\, .
        \end{equation}
        Further, we obtain by using \eqref{eqn:proof of spline estimate 1} that
        \[\left<u-u^{h,H},w\right>_a\leq \frac{1}{\sqrt{a_{\text{min}}}}C(d) H\rho_{2,d}(\frac{H}{h})\|v\|_{H_a^1(\Omega)}\|v\|_{L^2(\Omega)} \, . \]
        Combining the above estimate with \eqref{eqn: v=u-u h H}, we have
        \begin{equation}
        \label{eqn: H1 to L2 estimate}
        \begin{aligned}
        \|u-u^{h,H} \|_{L^2(\Omega)} &\leq \frac{1}{\sqrt{a_{\min}}}C(d)H\rho_{2,d}(\frac{H}{h})\|u-u^{h,H}\|_{H_a^1(\Omega)} \\ &\leq  \frac{1}{\sqrt{a_{\min}}}C(d)H\rho_{2,d}(\frac{H}{h})\|u\|_{H_a^1(\Omega)} \, .
        \end{aligned}
        \end{equation}
        Thus, we complete the proof for the first part.
        
        For the case $\cL u \in L^2(\Omega)$, we follow the same strategy as outlined in  \eqref{eqn: w variational form}, \eqref{eqn: w error estimate} and \eqref{eqn:proof of spline estimate 2} (apply all the operations on $w$ to the function $u$ and note that $\cL v=w$ in \eqref{eqn:proof of spline estimate 2}), which implies
        \[ \|u-u^{h,H}\|_{H^1_a(\Omega)} \leq \frac{1}{\sqrt{a_{\min}}}C(d)H\rho_{2,d}(\frac{H}{h})\|\cL u\|_{L^2(\Omega)} \, . \]
        So we have obtained the improved estimate in the energy norm. 
        To get the improved $L^2$ estimate, we apply the argument in \eqref{eqn: H1 to L2 estimate}, which leads to 
        \begin{equation*}
        \begin{aligned}
        \|u-u^{h,H} \|_{L^2(\Omega)} &\leq \frac{1}{\sqrt{a_{\min}}}C(d)H\rho_{2,d}(\frac{H}{h})\|u-u^{h,H}\|_{H_a^1(\Omega)}\\ &\leq \frac{1}{a_{\min}}C(d)^2H^2\rho_{2,d}(\frac{H}{h})^2\|\cL u\|_{L^2(\Omega)}\, .
        \end{aligned}
        \end{equation*}
        The proof is completed.
    \end{proof}
    Let us discuss the implication of Theorem \ref{thm:approximation error, subsampled spline}. It shows that under the assumption $\|u\|_{H^1_a(\Omega)} \leq M$, the recovery using piecewise constant functions and using the improved basis functions achieve the same $L^2$-norm accuracy; they are both of order $O(H)$, if the ratio $H/h$ is fixed as we refine $H$. Using the improved basis functions yields a bounded error in the energy norm, i.e., the recovery is stable with respect to the energy norm, as a consequence of Proposition \ref{prop: u h, H is projection}; this property does not hold for the piecewise constant recovery.
    
    Furthermore, when we know additional information that $\|\cL u\|_{L^2(\Omega)}$ is finite, the accuracy of the recovery using the multiscale basis functions is improved, from $O(1)$ to $O(H)$ in the energy norm, and $O(H)$ to $O(H^2)$ in the $L^2$ norm. This phenomenon implies the importance of adapting the regularity of the basis functions to the regularity of the ground truth. 
    
  In addition, we provide several remarks below:
    \begin{enumerate}
        \item Despite the desired property of the improved basis function, its construction requires more computational efforts. The optimization problem is on the global domain $\Omega$. In practical computation, one needs to localize the domain. This difficulty is addressed by observing that $\psi_i^{h,H}$ exhibits exponential decay in the energy norm \cite{malqvist_localization_2014, owhadi_multigrid_2017}, with respect to the distance from the center of the corresponding measurement function $\phi_i^{h,H}$. Thus, the computation can be localized by replacing the global domain $\Omega$ in the constraint set for $\psi \in H_0^1(\Omega)$ of \eqref{eqn: optimization def basis} by some localized oversampling domain around $\omega_i^H$.  Discussions on this issue will be included in our companion paper \cite{chen-hou_numerical} for the computation of multiscale PDEs.
        \item The results in this section also apply to the subsampled measurements with the sliced data type. As we see, the main technique used in the proof is the subsampled Poincar\'e inequality. By Proposition \ref{example: sliced Poincare}, the inequality holds for the sliced data case.
        \item It is possible to obtain basis functions with even higher regularity. We refer to \cite{owhadi2014polyharmonic, owhadi_multigrid_2017, owhadi2019operator, hou_pengchuan}; they mainly focus on the case $h=0$ or $h=H$. The adaptation to the setting $0<h<H$ will be natural.
    \end{enumerate}
    Overall, given the subsampled data, it is important to use appropriate basis functions for the recovery. The generalization to higher-order differential operators or PDEs may need the tool of ``subsampled'' Bramble-Hilbert lemma with an optimal rate on the small scale parameter $h$.
    
    \section{Degeneracy and Weighted Estimate}
    \label{sec:weighted estimate and semi-supervised}
    In the last two sections, we have used the subsampled data to build a recovery of $u$, using piecewise constants or improved basis functions, respectively. From the error estimate, we observed that when $d \geq p$, the error blows up when $h$ goes to $0$. As we mentioned earlier, this phenomenon is not avoidable in general, if we only know that $u$ belongs to $W^{1,p}(\Omega)$. Pointwise evaluations are not stable for functions in this space if $d \geq p$. 
    
    In practice, we often encounter recovery problems in a high dimension. It is natural to ask whether this degeneracy issue can be fixed by imposing more structures on $u$. There has been some work in which $u$ is assumed to be in $W^{k,2}(\Omega)$ for some $k>1$ \cite{high_order_regularization}; this assumption ensures the continuity of the function. Alternatively, one can increase $p$, and when $p > d$, the degeneracy issue disappears; see \cite{p_Laplacian_Jordan, p_Laplacian_analysis, Lipschitz_learning, Lipschitz_learning_analysis}.
    
    In this section, we consider the approach of imposing a singular weight in the gradient norm to tackle the degeneracy issue, motivated by the works \cite{WNLL, calder_properly-weighted_2018}. We study a weighted Poincar\'e inequality as a tool to analyze the recovery error for functions that belong to a weighted space.
    \subsection{A Weighted Poincar\'e Inequality}
    We consider a general $p$ that may not equal $2$, and we assume $d\geq p$; thus, the space $W^{1,p}(\Omega)$ does not embed into the functional space consisting of continuous functions. We start with definitions on the weighted norms and domains.
    \subsubsection{Norms and Domains}
    For a weight function $w>0$, the weighted norm $\|\cdot\|_{L^p_w(\Omega)}$ is defined by 
    \[\|u\|_{L^p_w(\Omega)}:=\left(\int_{\Omega} w(x)|u(x)|^p \, \rd x\right)^{1/p}\, .\] The distance of $x$ to a set $D$ is denoted by $\sfd(x,D)$, and the distance between two sets $A$ and $B$ in Euclidean space is denoted by $\sfd(A,B)$. The domains $D,\Omega$ under consideration satisfy the following assumption.
    \begin{assumption}
        \label{assumption: regular domain}
        There exist positive constants $C_1(d,p)$ and $C_2(d,p)$, such that for the domain $M=\Omega$ or $D$, it holds \[C_1(d,p)^d\mathrm{diam}(M)^d \leq \mu_d(M) \leq C_2(d,p)^d\mathrm{diam}(M)^d\, . \]
    \end{assumption}
    The assumption simply says that the domain cannot deviate too far from a ball. The diameter is a good measure of its shape.
    \subsubsection{The Weighted Inequality} In this subsection, we present the weighted inequalities in Theorems \ref{thm: weight Poincare W11} and \ref{thm: weighted Poincare}; their proofs can be found in Subsections \ref{Proof of Theorem thm: weight Poincare W11} and \ref{Proof of Theorem thm: weighted Poincare}. The proof is an application of Theorem \ref{thm: general Poincare}; here the difference between these two theorems and Theorem \ref{thm: general poincare for general p} is that we characterizes the function $\lambda(\frac{z-\overline{\Omega}}{1-t}\cap \overline{\Omega})$ in Assumption \ref{assumption: growth condition for the measure} in a more refined way than the uniform bound used in Theorem \ref{thm: general poincare for general p}.
    
    We use $\max\{a,b\}$ to represent the maximum of the real numbers $a$ and $b$.
    \begin{theorem}
        \label{thm: weight Poincare W11}
        Let $D \subset \Omega$ satisfy Assumptions \ref{assumption: for general Poincare} and \ref{assumption: regular domain}, with $\mu_d(\Omega)=H^d$ and $\mu_d(D)=h^d$. For every $u \in W^{1,1}(\Omega)$, the following inequality holds:
        \[\|u-\frac{1}{h^{d}}\int_D u \|_{L^1(\Omega)} \leq C(d,p)H \|Du\|_{L^1_w(\Omega)}\, , \]
        where the weight function is chosen to be
        \begin{equation*}
        w(x)=\left(\frac{H}{\max \{h,\sfd(x,D)\}}\right)^{d-1}\, ,
        \end{equation*}
        and $C(d,p)$ is a constant that depends on $d$ and $p$ only.
    \end{theorem}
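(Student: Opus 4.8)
The plan is to apply Theorem~\ref{thm: general Poincare} with the measure $\lambda = h^{-d}\chi_D\,\rd x$, which is nonnegative with $\lambda(\overline\Omega)=h^{-d}\mu_d(D)=1$ and satisfies $\int_{\overline\Omega} u\,\rd\lambda = h^{-d}\int_D u$, and then to estimate the resulting kernel
\[
K(z) := \int_0^1 \frac{1}{t^d}\,\lambda\!\left(\tfrac{z-t\overline\Omega}{1-t}\cap\overline\Omega\right)\rd t
\]
\emph{pointwise in $z$}, rather than replacing $\lambda(\cdots)$ by the uniform bound $\alpha(t)$ of Assumption~\ref{assumption: growth condition for the measure} as in Theorem~\ref{thm: general poincare for general p}. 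The goal is to prove $\mathrm{diam}(\Omega)\,K(z)\le C(d,p)\,H\,w(z)$; once this holds, integrating against $|Du(z)|$ and using $\mathrm{diam}(\Omega)\sim H$ (from Assumption~\ref{assumption: regular domain}) turns the right-hand side of Theorem~\ref{thm: general Poincare} into $C(d,p)H\|Du\|_{L^1_w(\Omega)}$. The reduction from $W^{1,1}(\Omega)$ to $C^\infty(\overline\Omega)\cap W^{1,1}(\Omega)$ is the same density argument as in Theorem~\ref{thm: general poincare for general p}, and is licensed here because $u\mapsto h^{-d}\int_D u$ is a bounded functional on $W^{1,1}(\Omega)$.

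The heart of the argument is two geometric estimates for $S_{z,t}:=\tfrac{z-t\overline\Omega}{1-t}$. First, a support estimate: if $x\in D\cap S_{z,t}$ then $z=(1-t)x+ty$ for some $y\in\overline\Omega$, so $|z-x|=t|y-x|\le t\,\mathrm{diam}(\Omega)$; since $x\in D$ this forces $\sfd(z,D)\le t\,\mathrm{diam}(\Omega)$, whence $\lambda(S_{z,t}\cap\overline\Omega)=0$ whenever $t<t_0:=\sfd(z,D)/\mathrm{diam}(\Omega)$. Second, a volume estimate: $S_{z,t}$ is an affine image of $\overline\Omega$ with contraction factor $t/(1-t)$, so $\mu_d(S_{z,t})=(t/(1-t))^d H^d$, and therefore
\[
\lambda(S_{z,t}\cap\overline\Omega)=\frac{1}{h^d}\mu_d(D\cap S_{z,t})\le \min\!\left\{1,\ \Big(\tfrac{t}{1-t}\cdot\tfrac{H}{h}\Big)^{d}\right\}.
\]

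Combining the two bounds gives
\[
K(z)\ \le\ \int_{t_0}^1 \frac{1}{t^d}\min\!\left\{1,\ \Big(\tfrac{t}{1-t}\cdot\tfrac{H}{h}\Big)^{d}\right\}\rd t,
\]
and I would evaluate this by distinguishing $\sfd(z,D)>h$ from $\sfd(z,D)\le h$. When $\sfd(z,D)>h$, bound the $\min$ by $1$ and use $\int_{t_0}^1 t^{-d}\,\rd t\lesssim_d t_0^{1-d}\sim (H/\sfd(z,D))^{d-1}$, matching $w(z)$. When $\sfd(z,D)\le h$, split at the crossover $t_1=h/(H+h)\sim h/H$: on $[t_0,t_1]$ the volume branch gives $(H/h)^d\int_{t_0}^{t_1}(1-t)^{-d}\,\rd t\lesssim_d (H/h)^d t_1\sim (H/h)^{d-1}$ (the factor $(1-t)^{-d}$ being harmlessly bounded since $t_1\le\tfrac12$), and on $[t_1,1]$ the constant branch gives $\int_{t_1}^1 t^{-d}\,\rd t\lesssim_d t_1^{1-d}\sim (H/h)^{d-1}$. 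In both regimes $K(z)\lesssim_d (H/\max\{h,\sfd(z,D)\})^{d-1}=w(z)$, as required, with all constants absorbed into $C(d,p)$ through Assumption~\ref{assumption: regular domain}.

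The main obstacle is the bookkeeping in this final integral: one must check that the support cutoff $t_0$ and the volume-to-constant switch $t_1$ interact so as to yield \emph{exactly} the power $d-1$ and the denominator $\max\{h,\sfd(z,D)\}$ — that is, the near-singular weight emerges precisely when $z$ is close to $D$ (small $\sfd(z,D)$, large $w$) and decays to the unweighted Poincar\'e regime $w\sim 1$ when $z$ is far from $D$. Everything else is elementary.
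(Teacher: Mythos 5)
Your proposal is correct and follows essentially the same route as the paper's proof: both apply Theorem \ref{thm: general Poincare} with $\lambda=h^{-d}\chi_D\,\rd x$ and then bound the kernel pointwise in $z$ by combining a support cutoff (the intersection $\frac{z-t\overline{\Omega}}{1-t}\cap D$ is empty for $t$ below roughly $\sfd(z,D)/\mathrm{diam}(\Omega)$) with the volume bound $\min\{1,(\frac{t}{1-t}\frac{H}{h})^d\}$, splitting into the near-$D$ and far-from-$D$ regimes. The only cosmetic difference is that you phrase the support estimate directly in terms of $\sfd(z,D)$, whereas the paper normalizes $0\in D$ and works with $|z|$, comparing the two via Assumption \ref{assumption: regular domain}.
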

    The result in Theorem \ref{thm: weight Poincare W11} is a little stronger than Proposition \ref{example: subsample Poincare} for $p=1$. We can easily see this using the fact that $w(x)\leq (H/h)^{d-1}=\rho_{1,d}(H/h)$ for any $x \in \Omega$. From this perspective, the weighted inequality uses more refined spatial information on the gradient norm, compared to the previous subsampled Poincar\'e inequality. 
    
    The weighted inequality for the $p>1$ case is stated below.
    \begin{theorem}
        \label{thm: weighted Poincare}
        Let $D \subset \Omega$ satisfy Assumptions \ref{assumption: for general Poincare} and \ref{assumption: regular domain}, with $\mu_d(\Omega)=H^d$ and $\mu_d(D)=h^d$. For every $u \in W^{1,p}(\Omega)$ with $p > 1$, the following inequality holds true
        \begin{equation}
        \label{eqn: weighted poincare}
           \|u-\frac{1}{h^{d}}\int_D u \|_{L^p(\Omega)} \leq C(d,p)H \|Du\|_{L^p_w(\Omega)}\, , 
        \end{equation}
        if the weight function satisfies the condition
        \begin{equation}
        \label{eqn: requirement on the weight}
        \int_{\Omega} \left(\frac{H}{\max \{h,\sfd(x,D)\}}\right)^{\frac{p(d-1)}{p-1}}w(x)^{-\frac{1}{p-1}}\, \rd x\leq C_w(d,p) H^d \, ,
        \end{equation}
        where $C(d,p)$ and $C_w(d,p)$ are constants that depend on $d$ and $p$ only.
    \end{theorem}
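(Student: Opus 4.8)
The plan is to apply Theorem~\ref{thm: general Poincare} to the subsampled measure $\lambda=h^{-d}\chi_D\,\rd x$, which has unit mass because $\mu_d(D)=h^d$, but to keep track of the full $z$-dependence of $\lambda(\frac{z-t\overline{\Omega}}{1-t}\cap\overline{\Omega})$ instead of dominating it by the uniform function $\alpha(t)$ of Assumption~\ref{assumption: growth condition for the measure}. By the density of $C^\infty(\overline{\Omega})\cap W^{1,p}(\Omega)$ in $W^{1,p}(\Omega)$ (the convexity in Assumption~\ref{assumption: for general Poincare} gives a Lipschitz boundary) and the boundedness of $u\mapsto h^{-d}\int_D u$, it suffices to treat $u\in C^\infty(\overline{\Omega})$. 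Writing $v=u-h^{-d}\int_D u$ and running the Newton--Leibniz step in the proof of Theorem~\ref{thm: general Poincare} but \emph{without} integrating in $y$, the change of variables $z=(1-t)x+ty$ produces the pointwise bound
\[
|v(y)|\le \operatorname{diam}(\Omega)\int_\Omega k(y,z)\,|Du(z)|\,\rd z,\qquad k(y,z)=\frac{1}{h^{d}}\int_0^1 \frac{\chi_{(1-t)D+ty}(z)}{(1-t)^{d}}\,\rd t.
\]

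The technical core, and the step I expect to be the main obstacle, is a sharp estimate of $k(\cdot,z)$ as a function of $y$. Two features must be extracted. First, $k(y,z)\neq0$ forces $z=(1-t)\xi+ty$ for some $\xi\in D$, whence $\sfd(z,D)\le|z-\xi|=t|y-\xi|\le t\operatorname{diam}(\Omega)$; since $\operatorname{diam}(\Omega)\sim H$ by Assumption~\ref{assumption: regular domain}, only $t\gtrsim \sfd(z,D)/H$ contributes. Second, for fixed $z$ and $t$ the set $\{y:\tfrac{z-ty}{1-t}\in D\}$ is a scaled copy of $D$ of volume $\min\{((1-t)/t)^{d}h^{d},\,H^{d}\}$ after intersection with $\Omega$. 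Feeding these into the $t$-integral and splitting at the crossovers $t\sim h/H$ and $t\sim\sfd(z,D)/H$ — exactly the computation behind Theorem~\ref{thm: weight Poincare W11} — gives the mass bound $\int_\Omega k(y,z)\,\rd y\lesssim w_\star(z)$, where $w_\star(z)=\big(H/\max\{h,\sfd(z,D)\}\big)^{d-1}$; this is already the $p=1$ statement. For $p>1$ I need the strictly stronger fact that $k(\cdot,z)$ is \emph{spread} over a set of measure $\sim\mu_d(\Omega)=H^d$ rather than concentrated, so that, with $p'=\tfrac{p}{p-1}$,
\[
\|k(\cdot,z)\|_{L^p(\rd y)}\lesssim H^{-d/p'}\,w_\star(z).
\]
Controlling the height profile of $k(\cdot,z)$ — and not merely its total mass — is the delicate point; the standing hypothesis $d\ge p$ is what keeps the relevant powers of the $t$-integral summable and makes this $L^p$ bound hold.

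Granting the kernel estimate, the conclusion follows from two soft steps. Minkowski's integral inequality applied to the first display yields
\[
\|v\|_{L^p(\Omega)}\le \operatorname{diam}(\Omega)\int_\Omega \|k(\cdot,z)\|_{L^p(\rd y)}\,|Du(z)|\,\rd z\lesssim H^{1-d/p'}\int_\Omega w_\star(z)\,|Du(z)|\,\rd z,
\]
and a single Hölder inequality in $z$, splitting $|Du|=(w^{1/p}|Du|)\,w^{-1/p}$, bounds the last integral by $\|Du\|_{L^p_w(\Omega)}\big(\int_\Omega w_\star(z)^{p'}w(z)^{-1/(p-1)}\,\rd z\big)^{1/p'}$. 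Because $w_\star^{p'}=\big(H/\max\{h,\sfd(x,D)\}\big)^{p(d-1)/(p-1)}$, the hypothesis \eqref{eqn: requirement on the weight} says precisely that this integral is at most $C_w H^d$, so its $1/p'$ power contributes $H^{d/p'}$ and combines with the prefactor $H^{1-d/p'}$ to leave the advertised constant $C(d,p)H$ (using $\operatorname{diam}(\Omega)\sim H$ once more). As a consistency check, the canonical choice $w=w_\star$ collapses \eqref{eqn: requirement on the weight} to $\int_\Omega w_\star\,\rd x\lesssim H^d$, which one verifies by integrating over dyadic shells $\{\,2^{k}h\le \sfd(x,D)<2^{k+1}h\,\}$ around $D$, thereby recovering Theorem~\ref{thm: weight Poincare W11} for $p>1$ as a special case.
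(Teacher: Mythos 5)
Your kernel representation, and the mass bound $\int_\Omega k(y,z)\,\rd y\lesssim w_\star(z)$ that recovers the $p=1$ case (Theorem \ref{thm: weight Poincare W11}), are both correct. But the estimate you yourself single out as the technical core, $\|k(\cdot,z)\|_{L^p(\rd y)}\lesssim H^{-d/p'}w_\star(z)$ with $p'=p/(p-1)$, is false, and the heuristic behind it (``$k(\cdot,z)$ is spread over a set of measure $\sim H^d$'') is exactly backwards. Since $\chi_{(1-t)D+ty}(z)\neq 0$ means $z+\frac{t}{1-t}(z-y)\in D$, the kernel $k(y,z)$ is nonzero only if the ray $\{z+s(z-y):s\geq 0\}$ meets $D$; hence $y$ must lie in a cone with apex $z$, pointing \emph{away} from $D$, whose solid angle is of order $(h/\max\{h,\delta\})^{d-1}$, where $\delta=\sfd(z,D)$. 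So the support of $k(\cdot,z)$ in $\Omega$ has measure $\sim H^d(h/\max\{h,\delta\})^{d-1}\ll H^d$ once $\delta\gg h$. Concretely, take $D=B^d(x_0,h)$ and $z$ at distance $\delta$ from $D$, $h\ll\delta\ll H$, with the cone pointing into the bulk of $\Omega$. On the part of the cone where $|y-z|\sim H$ one has $k(y,z)\sim h^{-d}\cdot(h/H)=h^{1-d}H^{-1}$; this is consistent with the mass bound, since $h^{1-d}H^{-1}\cdot H^d(h/\delta)^{d-1}=(H/\delta)^{d-1}=w_\star(z)$, but it forces
\[
\int_\Omega k(y,z)^p\,\rd y\ \gtrsim\ \bigl(h^{1-d}H^{-1}\bigr)^p\,H^d(h/\delta)^{d-1}\ =\ (\delta/h)^{(d-1)(p-1)}\,H^{-d(p-1)}\,w_\star(z)^p\,,
\]
so $\|k(\cdot,z)\|_{L^p(\rd y)}$ exceeds your claimed bound by the unbounded factor $(\delta/h)^{(d-1)/p'}$. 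The standing hypothesis $d\geq p$ is irrelevant here: the failure is geometric concentration in $y$, not divergence of a $t$-integral. Moreover, the defect is not repairable within your scheme: Minkowski's inequality discards the fact that the cones attached to different $z$ point in different directions, and if you insert the \emph{true} $L^p$ kernel bound into your Minkowski--H\"older steps, the condition you need becomes \eqref{eqn: requirement on the weight} with an extra factor $(\max\{h,\sfd(x,D)\}/h)^{d-1}$ inside the integral; that strengthened condition fails for the paper's own Example \ref{example: weight 1} (the integral is then of order $H^d(H/h)^{d-1}$), so this route cannot prove the theorem as stated.

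The paper's proof needs no $L^p$ information about the kernel at all. By the triangle inequality it splits $u-h^{-d}\int_D u=(u-H^{-d}\int_\Omega u)+(H^{-d}\int_\Omega u-h^{-d}\int_D u)$. The first piece is controlled by the classical unweighted Poincar\'e inequality, giving $C(d,p)H\|Du\|_{L^p(\Omega)}$ (which the paper then absorbs into the weighted norm; this uses that the weights of interest are bounded below, as in Examples \ref{example: weight 1} and \ref{example: weight 2}). The second piece is a \emph{constant}: its $L^p(\Omega)$ norm is just $H^{d/p}$ times the scalar $|H^{-d}\int_\Omega u-h^{-d}\int_D u|\leq H^{-d}\int_\Omega\int_D h^{-d}|u(x)-u(y)|\,\rd x\,\rd y$, and this double integral is precisely the $L^1$ quantity your mass bound already controls, $\lesssim H\int_\Omega w_\star|Du|$. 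A single H\"older inequality, splitting $|Du|=w^{-1/p}\cdot w^{1/p}|Du|$, bounds $\int_\Omega w_\star|Du|$ by $(C_w H^d)^{1/p'}\|Du\|_{L^p_w(\Omega)}$ --- this is exactly where \eqref{eqn: requirement on the weight} enters --- and the powers of $H$ collect as $H^{d/p-d}\cdot H\cdot H^{d/p'}=H$. In short: all genuine $y$-oscillation is delegated to the standard Poincar\'e inequality, the subsampled kernel is used only through its total mass, and your correct $p=1$ work is all that is required; replacing your Minkowski step by this splitting would salvage the argument.
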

    The above theorem contains a general requirement on the weight function $w$. We will discuss the choices in detail in the next subsection.
        \subsubsection{Examples of The Weights} In this subsection, we present some examples that satisfy the condition \eqref{eqn: requirement on the weight}. We assume $0 \in D \subset \Omega$; otherwise, we can shift the domain.
        
        We begin with weight functions of a polynomial profile in Example \ref{example: weight 1}.
    \begin{example}
        \label{example: weight 1}
        The weight function 
        $$w(x)=\left(\frac{H}{\max \{h,\sfd(x,D)\}}\right)^{d-p+\beta}$$
        for any $\beta > 0$ satisfies the condition in Theorem \ref{thm: weighted Poincare}.
    \end{example}
     \begin{proof}
        We assume $d-\frac{\beta}{p-1}>0$ first. Direct calculation leads to
        \begin{align*}
            &\left(\frac{H}{\max \{h,\sfd(x,D)\}}\right)^{\frac{p(d-1)}{p-1}} w(x)^{-\frac{1}{p-1}}\\
            &=\left(\frac{H}{\max \{h,\sfd(x,D)\}}\right)^{d-\frac{\beta}{p-1}}\, \leq C(d,p)\left(\frac{H}{|x|}\right)^{d-\frac{\beta}{p-1}}\, ,
        \end{align*}
       where we have used the fact that $|x|\leq C(d,p)\max\{h,d(x,D)\}$. We can prove this fact as follows. Assumption \ref{assumption: regular domain} implies that there is a constant $C(d,p)$ such that $\sfd(x,D)\geq |x|-C(d,p)h$ since the diameter of $D$ is bounded by a factor of $h$ and $0 \in D$. Then, it follows that 
       \begin{equation}
       \max\{h,\sfd(x,D)\}\geq \frac{C(d,p)h+\sfd(x,D)}{C(d,p)+1}\geq \frac{1}{C(d,p)+1}|x|\, ,
       \end{equation}
       where we have used the fact that the maximum of two numbers is larger than convex combination of them. 
        Note that we use $C(d,p)$ to represent a generic constant dependent on $d,p$, and its value can vary from place to place.
        
        When $\beta>0$, $x\to |x|^{-d+\frac{\beta}{p-1}}$ is integrable around the origin in $d$ dimensional space. Thus,
        \begin{align*}
            \int_{\Omega} \left(\frac{H}{|x|}\right)^{d-\frac{\beta}{p-1}}\, \rd x \leq \int_{B^d(0,C(d,p)H)} \left(\frac{H}{|x|}\right)^{d-\frac{\beta}{p-1}}\, \rd x \leq C(d,p)\beta H^d\, ,
        \end{align*}
        where $C(d,p)$ is a constant that depends on $d,p$, and can vary its value from place to place. The first inequality is by Assumption \ref{assumption: regular domain}, and the second inequality is by direct integration.
        
        If $d \leq \frac{\beta}{p-1}$, we have the relation: $\max\{h,\sfd(x,D)\}\leq \max\{h,|x|\}$, which implies
        \begin{align*}
            \int_{\Omega} \left(\frac{H}{\max \{h,\sfd(x,D)\}}\right)^{d-\frac{\beta}{p-1}}\, \rd x&\leq \int_{B^d(0,h)}\left(\frac{h}{H}\right)^{\frac{\beta}{p-1}-d}\, \rd x+\int_{\Omega} \left(\frac{|x|}{H}\right)^{\frac{\beta}{p-1}-d}\, \rd x\\
            &\leq C(d,p,\beta)H^d\, ,
        \end{align*}
        for some constant $C(d,p,\beta)$ that depends on $d,p$ and $\beta$ only.
    \end{proof}
    When $\beta=0$ in the above example, we can also supplement some logarithmic correction to ensure the integrability condition; see Example \ref{example: weight 2}.
    \begin{example}
        \label{example: weight 2}
        The weight function 
        $$w(x)=\left(\frac{H}{\max \{h,\sfd(x,D)\}}\right)^{d-p}\left(\log(\frac{1}{\max \{h,\sfd(x,D)\}})+1\right)^{\gamma}/\left(\log(\frac{1}{H})+1\right)^{\gamma-p+1}$$ for $\gamma > p-1$
        satisfies the condition in Theorem \ref{thm: weighted Poincare}.
    \end{example}
    \begin{proof}
            Similarly, we get
            \begin{align*}
                &\left(\frac{H}{\max \{h,\sfd(x,D)\}}\right)^{\frac{p(d-1)}{p-1}}w(x)^{-\frac{1}{p-1}}\\
                =&H^d\left(\frac{1}{\max \{h,\sfd(x,D)\}}\right)^{d}\left(\log(\frac{1}{\max \{h,\sfd(x,D)\}})+1 \right)^{-\frac{\gamma}{p-1}}/\left(\log(\frac{1}{H})+1\right)^{1-\frac{\gamma}{p-1}} \\
                \leq &C(d,p)H^d\left(\frac{1}{|x|}\right)^{d}\left(\log(\frac{1}{|x|})+1 \right)^{-\frac{\gamma}{p-1}}/\left(\log(\frac{1}{H})+1\right)^{1-\frac{\gamma}{p-1}}\, .
            \end{align*}
            The proof is completed by noticing the fact that in $d$ dimension, the function $x\to |x|^{-d}\left(\log(\frac{1}{|x|}+1)\right)^{-\frac{\gamma}{p-1}}$ is integrable around the origin if $\gamma > p-1$.
    \end{proof}
    Similar to the discussion after Theorem \ref{thm: weight Poincare W11}, we can compare these weighted inequalities with the subsampled Poincar\'e inequality before, in the case $d\geq p$. For the weight function in Example \ref{example: weight 1}, we can simply bound $w(x)\leq (H/h)^{d-p+\beta}$, and thus the weighted inequality leads to
    \[\|u-\frac{1}{h^{d}}\int_D u \|_{L^p(\Omega)} \leq C(d,p)H(\frac{H}{h})^{\frac{d-p}{p}}(\frac{H}{h})^{\frac{\beta}{p}} \|Du\|_{L^p(\Omega)}\, . \]
    This is weaker than the subsampled Poincar\'e inequality up to a polynomial term of $H/h$, because $\beta>0$. In Example \ref{example: weight 2}, we bound $w(x)\leq (H/h)^{d-p}(\log(1/h)+1)^\gamma/\left(\log(\frac{1}{H})+1\right)^{\gamma-p-1}$, and it leads to the rate \[\left(\frac{H}{h}\right)^{\frac{d-p}{p}}\left(\log(\frac{1}{h})+1\right)^{\frac{\gamma}{p}}\] on the small scale parameter $h$, for $\gamma>p-1$. Compared to the rate $\rho_{p,d}(H/h)$ in Proposition \ref{example: subsample Poincare}, the result obtained by Example \ref{example: weight 2} is a little weaker up to a logarithmic term of $H/h$, but it is stronger than Example \ref{example: weight 1}.

    Thus, in terms of deriving the previous subsampled Poincar\'e inequality, the weighted inequality here is not optimal for $p>1$. We will discuss more the optimality of the inequality in the next subsection, regarding the zero-limit of $h$.    
    \subsubsection{Small Limit of Parameter $h$} Recall our motivation for considering the weighted inequality is to tackle the small $h$ issue. The inequality in the last subsection is non-asymptotic in $h$, i.e., it holds when $h$ is a finite number. In this subsection, we take $h$ to $0$ and see what happens for this Poincar\'e inequality.
    
    Let us consider $p>1$, and the weight function is given by Example \ref{example: weight 1}. When $h\to 0$, assume $0\in \Omega$ and $D$ converges to the single point $0$, then the weight function converges to $w(x)=H^{d-p+\beta}|x|^{-(d-p+\beta)}$. The right-hand side of the inequality \eqref{eqn: weighted poincare} converges to 
    \[C(d,p)H^{\frac{d+\beta}{p}}\left(\int_{\Omega} |x|^{-(d-p+\beta)}|Du(x)|^p \right)^{\frac{1}{p}}\, . \]
    For the left-hand side, we need to study whether $\int_D u/h^d$ will attain a limit as $h \to 0$. Indeed, we have the following proposition. The proof technique is similar to that of proving Lemma 2.1 and Theorem 2.2 in \cite{calder_properly-weighted_2018}.
    \begin{proposition}
    \label{prop: u(0) well defined}
    Let $0 \in \Omega \subset \bR^d$ satisfies Assumption \ref{assumption: regular domain}, and $d\geq p>1$. For a function $u \in W^{1,p}(\Omega)$, if 
    \[ \int_{\Omega} |x|^{-(d-p+\beta)}|Du(x)|^p < \infty  \]
    for some $\beta>0$, then we can define \[u(0):=\lim_{h \to 0} \frac{1}{\mu_d(B^d(0,h))}\int_{B^d(0,h)} u\, .\]
    Thus, the pointwise value of $u$ at $x=0$ makes sense, and we have the weighted inequality for $h=0:$
        \begin{equation}
        \label{eqn: pointwise weighted inequality}
           \|u-u(0) \|_{L^p(\Omega)} \leq C(d,p)H \|Du\|_{L^p_w(\Omega)}\, , 
        \end{equation}
        with the weight function given by
        $w(x)=(H/|x|)^{d-p+\beta}$.
    \end{proposition}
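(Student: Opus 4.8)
The plan is to split the argument into two parts: first show that the averages $\bar u_h := \frac{1}{\mu_d(B^d(0,h))}\int_{B^d(0,h)} u$ form a Cauchy family as $h\to 0$, so that $u(0)$ is well-defined, and then obtain \eqref{eqn: pointwise weighted inequality} by passing to the limit $h\to 0$ in the finite-scale weighted inequality of Theorem \ref{thm: weighted Poincare}. Throughout I would exploit that $d-p+\beta>0$, so the hypothesis $\int_\Omega |x|^{-(d-p+\beta)}|Du|^p<\infty$ controls the gradient mass near the origin; I abbreviate $G(h):=\int_{B^d(0,h)} |x|^{-(d-p+\beta)}|Du(x)|^p\,\rd x$, which tends to $0$ as $h\to 0$ by absolute continuity of the integral.

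For the existence of $u(0)$, fix $0<h'<h$ and apply Theorem \ref{thm: weighted Poincare} on the ball $B^d(0,h)$ with subsampled set $B^d(0,h')$, using the admissible polynomial weight of Example \ref{example: weight 1} (both balls satisfy Assumptions \ref{assumption: for general Poincare} and \ref{assumption: regular domain}). The key geometric estimate is that, since $0\in B^d(0,h')$, one has $\max\{h',\sfd(x,B^d(0,h'))\}\geq c(d)\,|x|$ for every $x\in B^d(0,h)$, by the same convex-combination argument as in the proof of Example \ref{example: weight 1}. This bounds the weight by $C(d)(h/|x|)^{d-p+\beta}$ on $B^d(0,h)$, so the right-hand side of the weighted inequality is dominated by $C(d,p)\,h\,(h^{d-p+\beta}G(h))^{1/p}$. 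Combining with the Hölder estimate $|\bar u_h-\bar u_{h'}|\leq \mu_d(B^d(0,h))^{-1/p}\|u-\bar u_{h'}\|_{L^p(B^d(0,h))}$ and tracking exponents, the powers of $h$ collapse to $h^{\beta/p}$, giving $|\bar u_h-\bar u_{h'}|\leq C(d,p)\,h^{\beta/p}G(h)^{1/p}$ uniformly in $h'<h$. Since $\beta>0$ this tends to $0$, so $\{\bar u_h\}$ is Cauchy and $u(0):=\lim_{h\to 0}\bar u_h$ exists.

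To prove the inequality at $h=0$, apply Theorem \ref{thm: weighted Poincare} on the full domain $\Omega$ with $D=B^d(0,h)$ and the weight $w_h(x)=(H/\max\{h,\sfd(x,B^d(0,h))\})^{d-p+\beta}$ from Example \ref{example: weight 1}, yielding $\|u-\bar u_h\|_{L^p(\Omega)}\leq C(d,p)\,H\,\|Du\|_{L^p_{w_h}(\Omega)}$ for each small $h$. On the left, since $\bar u_h\to u(0)$ and $\mu_d(\Omega)<\infty$, one gets $\|u-\bar u_h\|_{L^p(\Omega)}\to \|u-u(0)\|_{L^p(\Omega)}$. On the right, $w_h(x)\to (H/|x|)^{d-p+\beta}=w(x)$ pointwise for $x\neq 0$, and as $h\downarrow 0$ the denominator increases to $|x|$, so $w_h\leq w_{h_0}$ for $h\leq h_0$; the majorant $w_{h_0}|Du|^p\leq (H/h_0)^{d-p+\beta}|Du|^p$ lies in $L^1$, so dominated convergence gives $\|Du\|_{L^p_{w_h}(\Omega)}\to \|Du\|_{L^p_w(\Omega)}$, the limit being finite by hypothesis. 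Passing to the limit completes the proof.

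The main obstacle I anticipate is the exact bookkeeping in the Cauchy estimate: one must verify that the weight bound $\max\{h',\sfd(x,D)\}\gtrsim |x|$ holds uniformly on the whole ball $B^d(0,h)$ (including near the origin), and then confirm that the several powers of $h$ — from $\mu_d(B^d(0,h))^{-1/p}$, from the prefactor $H\sim h$, and from $h^{(d-p+\beta)/p}$ in the weighted gradient term — cancel to leave precisely $h^{\beta/p}$ with positive exponent. It is exactly the strict positivity of $\beta$ that makes this exponent positive and forces convergence; with $\beta=0$ the averages need not converge, mirroring the logarithmic borderline of Example \ref{example: weight 2}. A secondary technical point is securing an $h$-independent integrable majorant for the dominated-convergence step, which the monotonicity of $w_h$ in $h$ supplies.
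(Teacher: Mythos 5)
Your proposal is correct and follows essentially the same route as the paper: you show the averages are Cauchy with rate $h^{\beta/p}$ by applying the finite-scale weighted inequality of Theorem \ref{thm: weighted Poincare} with the weight of Example \ref{example: weight 1} on nested balls, and you obtain the $h=0$ inequality by passing to the limit in that same inequality, exactly as the paper does (your one-term Jensen bound in place of the paper's two-term triangle inequality is an immaterial variation). The only nitpick is that $w_h$ is not pointwise monotone in $h$ --- the denominator $\max\{h,\sfd(x,B^d(0,h))\}$ first decreases and then increases as $h\downarrow 0$ --- but your dominated-convergence step survives with the $h$-independent majorant $w_h(x)\le 2^{d-p+\beta}(H/|x|)^{d-p+\beta}$, which is integrable against $|Du|^p$ by hypothesis.
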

    \begin{proof}
    Since $0 \in \Omega$ is an open domain, there exists $h_0>0$ such that for all $0<h<h_0$, $B(x,h) \subset \Omega$. For any $0<h_l<h_k <h_0$, we have
    \begin{equation}
    \label{eqn: small limit, triangle inequ}
        \begin{aligned}
        &|\frac{1}{\mu_d(B^d(0,h_l))}\int_{B^d(0,h_l)}u-\frac{1}{\mu_d(B^d(0,h_k))}\int_{B^d(0,h_k)}u|\\
        \leq &C(d,p)h_k^{-\frac{d}{p}}(\|u-\frac{1}{\mu_d(B^d(0,h_l))}\int_{B^d(0,h_l)}u\|_{L^p(B^d(0,h_k))}\\
        &+\|u-\frac{1}{\mu_d(B^d(0,h_k))}\int_{B^d(0,h_k)}u\|_{L^p(B^d(0,h_k))})
        \end{aligned}
    \end{equation}
    by the triangle inequality and volume calculation. For the first term in the bracket above, the weighted inequality in Theorem \ref{thm: weighted Poincare} and Example \ref{example: weight 1} implies (substitute $\Omega=B^d(0,h_k)$ and $D=B^d(0,h_l)$ here)
    \begin{align*}
        \|u-\frac{1}{B^d(0,h_l)}\int_{B^d(0,h_l)}u\|_{L^p(B^d(0,h_k))}\leq &C(d,p)h_k^{\frac{d+\beta}{p}} \left(\int_{B^d(0,h_k)} |x|^{-(d-p+\beta)}|Du(x)|^p \right)^{\frac{1}{p}}\\
        \leq &C(d,p)h_k^{\frac{d+\beta}{p}} \left(\int_{\Omega} |x|^{-(d-p+\beta)}|Du(x)|^p \right)^{\frac{1}{p}}\, .
    \end{align*}
    This also holds for the second term in the bracket of equation \eqref{eqn: small limit, triangle inequ}. Thus, we get
    \begin{align*}
        &|\frac{1}{\mu_d(B^d(0,h_l))}\int_{B^d(0,h_l)}u-\frac{1}{\mu_d(B^d(0,h_k))}\int_{B^d(0,h_k)}u|\\
        \leq &C(d,p)h_k^{\frac{\beta}{p}} \left(\int_{\Omega} |x|^{-(d-p+\beta)}|Du(x)|^p \right)^{\frac{1}{p}}\, .
    \end{align*}
    By the convergence theorem for Cauchy's series, we obtain \[\lim_{h\to 0} \frac{1}{\mu_d(B^d(0,h))}\int_{B^d(0,h)} u\] exists and we define it to be the pointwise value $u(0)$. Taking $h \to 0$ in Theorem \ref{thm: weighted Poincare} with Example \ref{example: weight 1} leads to the weighted inequality
    \begin{equation*}
           \|u-u(0) \|_{L^p(\Omega)} \leq C(d,p)H \|Du\|_{L^p_w(\Omega)}\, , 
        \end{equation*}
        with the weight function given by
        $w(x)=(H/|x|)^{d-p+\beta}$. The proof is completed.
    \end{proof}
    We provide several remarks below:
    \begin{enumerate}
        \item For the weight function in Example \ref{example: weight 2}, results similar to those stated in Proposition \ref{prop: u(0) well defined} hold by using the same strategy in the proof. These singular weight functions allow the pointwise value of $u$ to be well-defined.
        \item If we set $\beta=0$ in Example \ref{example: weight 1} or $\gamma=p-1$ in Example \ref{example: weight 2}, then the corresponding weighted gradient norm being finite is not enough to guarantee a well-defined pointwise $u(0)$. More precisely, in Example \ref{example: weight 1}, if $\beta=0$, then the assumption on $u$ (taking $h \to 0$) is
        \[\int_{\Omega} |x|^{-(d-p)}|Du(x)|^p < \infty\, . \]
        For $p>1$, the function $u(x)=\log\log (1/|x|+1)$ satisfies this assumption, while $u(0)=\infty$. Thus, the pointwise value $u(0)$ is not a well-defined finite number, and the weighted Poincar\'e inequality for pointwise measurements does not hold. 
        
        For Example \ref{example: weight 2}, when $\gamma=p-1$, the counterexample can be chosen as $u(x)=\log\log\log(1/|x|+1)$. Therefore, the weight functions of Examples \ref{example: weight 1} and \ref{example: weight 2} are ``optimal'' in their family in the sense that a relaxed version cannot guarantee the function to have a well-defined pointwise value, and the corresponding Poincar\'e inequality for pointwise measurement does not hold.
    \end{enumerate}
    One could also propose new weight functions by making $\gamma=p-1$ in Example \ref{example: weight 2} and then adding iterated logarithmic corrections. It is of future interest to see whether this procedure would lead to some sensible limit when we add more and more iterated logarithmic corrections.
    \subsection{Application: Non-Degenerate Recovery}
    In this subsection, we discuss the application of the weighted Poincar\'e inequality for non-degenerate recovery when $d\geq p$. 
    \subsubsection{Domain and Decomposition}
    For simplicity, we consider the same domain as in the introduction: $\Omega= [0,1]^d=\bigcup_{i\in I} \omega_i^H$. The subsampled domain $\omega^{h,H}_i \subset \omega_i^H$; see Figure \ref{fig:two-scalegrid}. We denote $\Omega^{h,H}:=\bigcup_{i\in I} \omega_i^{h,H}$ which is the region that the subsampled data depend on. The recovery problem is to recover function $u$ after seeing the data $[u,\phi_i^{h,H}]$ for every $i \in I$; the notations $[\cdot,\cdot]$ and $\phi^{h,H}_i$ are the same as those defined in Section \ref{sec: improving the basis function}. 
    
    For each local patch $\omega_i^{H}, i \in I$, its center is denoted by $x_i \in \omega_i^{H}$. We write $X^H=\bigcup_{i=1}^I \{x_i^H \}$. We assume there is a sequence of subsampled domain $\omega_i^{h,H}$ indexed by $h$, and for each $i \in I$, $\bigcap_{h>0}\omega^{h,H}_i=\{x_i\}$. This assumption is natural when we want to study the degeneracy issue in approximation, i.e., eventually we will let the small scale parameter $h$ goes to zero; in the limit, the data we have becomes $\{u(x_i)\}_{i \in I}$.
	\subsubsection{Weight Function} We adopt the profile of weight function in Example \ref{example: weight 1}. Let $$w^{h,H}(x):=\left(\frac{H}{\max \{h,\sfd(x,\Omega^{h,H})\}}\right)^{d-p+\beta}$$
	for some $\beta >0$. When $h$ approaches $0$, it converges to
	\[w^{H}(x)=\left(\frac{H}{\sfd(x,X^H) }\right)^{d-p+\beta} \, .\]
	\subsubsection{Piecewise Constant Recovery}
	Based on the data $\{[u,\phi_i^{h,H}]\}_{i=1}^I$, we can estimate the error of the piecewise constant recovery using the weighted Poincar\'e inequality. In each local patch $\omega^{H}_i$, we have the error bounded by \[C(d,p,\beta) H\left(\int_{\omega^{H}_i} w^{h,H}(x)|Du(x)|^p \, \rd x\right)^{\frac{1}{p}}\, ,\] according to Theorem \ref{thm: weighted Poincare} and Example \ref{example: weight 1}. Here $C(d,p,\beta)$ is a constant that depends on $d,p, \beta$ only. Then, summing all the errors for $i \in I$, we get the overall error in the domain $\Omega$ upper bounded by
	\[C(d,p,\beta) H \left(\int_{\Omega} w^{h,H}(x)|Du(x)|^p \, \rd x\right)^{\frac{1}{p}} \, . \]
	We can get a universal upper bound of the above term by letting $h\to 0$ due to the monotonicity:
	\[C(d,p,\beta) H \left(\int_{\Omega} w^{H}(x)|Du(x)|^p \, \rd x\right)^{\frac{1}{p}}\, .\]
	Hence, if we have the assumption that $\int_{\Omega} w^{H}(x)|Du(x)|^p \, \rd x < \infty$, then the error estimate would not degrade as $h$ goes to $0$; the upper bound can be independent of $h$. The formula also tells us that acquiring data at these singular locations will be of the first importance if we aim to recover $u$, because $Du$ is small and $u$ is nearly flat around these regions.
	\subsubsection{Improved Basis Functions}
	To construct the improved basis function for $p=2$, we follow the same step in Section \ref{sec: improving the basis function}. We treat the weight function $w^{h,H}(x) \in L^{\infty}(\Omega)$ as the role of $a$ when $h$ is finite. Define the basis function by:
	\begin{equation*}
	\psi_{i}^{h,H,w} = \text{argmin}_{\psi \in H_0^1(\Omega)}\,  \int_{\Omega} w^{h,H}(x)|D\psi(x)|^2 \quad \text{s.t.} \quad 
	\int_{\omega_i^{H}} \psi \phi_j^{h,H} = \delta_{i,j} \quad \text{for} \quad  j \in I\, .
	\end{equation*}
	As before, the recovered solution is constructed by
	\[u^{h,H,w}=\sum_{i=1}^I [u,\phi_{i}^{h,H}]\psi_{i}^{h,H,w}\, . \]
	We have the following error estimate of the recovery. It is non-asymptotic regarding $h$.
	\begin{theorem}
		\label{thm:approximation error, weighted spline}
		Under the assumption that $u \in H_0^1(\Omega)$, we have the following error estimate:
        \begin{align*}
        & \int_{\Omega} w^{h,H}|D(u-u^{h,H,w})|^2 \leq \int_{\Omega} w^{h,H}|Du|^2 \\
        & \int_{\Omega} |u-u^{h,H,w}|^2 \leq C(d)H^2\int_{\Omega} w^{h,H}|Du|^2\, ,
        \end{align*}
        where $C(d)$ is a constant that depends on $d$ only.
        
        Furthermore, under the assumption that $f^h:=\nabla \cdot (w^{h,H} \nabla u) \in L^2(\Omega)$, we have the improved energy estimate:
        \[ \int_{\Omega} w^{h,H}|D(u-u^{h,H,w})|^2 \leq C(d)H^2\|f^h\|^2_{L^2(\Omega)} \, , \]
        and the improved $L^2(\Omega)$ estimate:
        \[\int_{\Omega} |u-u^{h,H,w}|^2 \leq C(d)H^4\|f^h\|^2_{L^2(\Omega)}\, .\]
        The constant $C(d)$ can vary from place to place.
	\end{theorem}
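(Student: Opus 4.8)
The plan is to replicate the argument of Theorem \ref{thm:approximation error, subsampled spline} essentially verbatim, with the coefficient $a$ replaced by the weight $w^{h,H}$ and the subsampled Poincar\'e inequality replaced by the weighted one in Theorem \ref{thm: weighted Poincare}. What makes this transfer legitimate is that for any \emph{finite} $h>0$ the weight $w^{h,H}$ is bounded above and below by positive constants, taking values between $(H/\operatorname{diam}(\Omega))^{d-2+\beta}$ and $(H/h)^{d-2+\beta}$. Hence $\cL_w:=-\nabla\cdot(w^{h,H}\nabla\,\cdot\,)$ is a uniformly elliptic operator with $L^\infty$ coefficient bounded away from zero, and the weighted inner product $\langle u,v\rangle_w:=\int_\Omega w^{h,H}\nabla u\cdot\nabla v$ induces a norm equivalent to the $H^1_0(\Omega)$ norm. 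Therefore the analogues of Propositions \ref{prop: explicit form of spline} and \ref{prop: u h, H is projection} hold with $\cL$ and $\langle\cdot,\cdot\rangle_a$ replaced by $\cL_w$ and $\langle\cdot,\cdot\rangle_w$: we have $\mathrm{span}_{i\in I}\{\psi_i^{h,H,w}\}=\mathrm{span}_{i\in I}\{\cL_w^{-1}\phi_i^{h,H}\}$, and $u^{h,H,w}$ is the $\langle\cdot,\cdot\rangle_w$-orthogonal projection of $u$ onto this span. The first (stability) estimate is then immediate, since a projection does not increase the weighted energy norm $\|\cdot\|_{\cL_w}:=\langle\cdot,\cdot\rangle_w^{1/2}$.

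For the $L^2$ estimate I would run the duality argument of \eqref{eqn: v=u-u h H}--\eqref{eqn: H1 to L2 estimate}. Setting $v=u-u^{h,H,w}$ and letting $g$ solve $\cL_w g=v$ with homogeneous Dirichlet data, one has $\|v\|_{L^2(\Omega)}^2=\langle v,g\rangle_w$; by Galerkin orthogonality $\langle v,g\rangle_w=\langle v,g-g^{h,H,w}\rangle_w$, so it suffices to bound the best-approximation error of $g$ in the weighted energy norm. Choosing $g_0=\sum_{i} c_i\cL_w^{-1}\phi_i^{h,H}$ with $c_i=\int_{\omega_i^H}v$ and using the algebraic identity of \eqref{eqn: w error estimate}, the energy error splits into $\sum_i\int_{\omega_i^H}\big(\,(g-g_0)-\int_{\omega_i^H}(g-g_0)\phi_i^{h,H}\,\big)v$, where each bracket is $g-g_0$ minus its subsampled average over $\omega_i^{h,H}$. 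Applying the weighted Poincar\'e inequality on each patch and summing via Cauchy--Schwarz gives \[\|g-g_0\|_{\cL_w}\leq C(d)H\,\|v\|_{L^2(\Omega)},\] and hence $\|v\|_{L^2(\Omega)}\leq C(d)H\|v\|_{\cL_w}\leq C(d)H\|u\|_{\cL_w}$, which is the second estimate after squaring. Note that, in contrast with Theorem \ref{thm:approximation error, subsampled spline}, no factor $a_{\min}^{-1/2}$ appears, because the weighted Poincar\'e inequality already produces the weighted gradient norm $\int_{\omega_i^H} w^{h,H}|D\,\cdot\,|^2$ on its right-hand side, which sums exactly to $\|\cdot\|_{\cL_w}^2$.

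The two improved estimates follow by transferring the same operations to $u$ itself and using $f^h=\nabla\cdot(w^{h,H}\nabla u)=-\cL_w u\in L^2(\Omega)$. Writing $\|u-u^{h,H,w}\|_{\cL_w}$ as the best approximation of $u$ by $\sum_i c_i\cL_w^{-1}\phi_i^{h,H}$ and choosing $c_i=-\int_{\omega_i^H}f^h=\int_{\omega_i^H}\cL_w u$, the computation of \eqref{eqn: w error estimate}--\eqref{eqn:proof of spline estimate 2} (with $\cL_w u$ in the role of $v$), together with the weighted Poincar\'e inequality, gives $\|u-u^{h,H,w}\|_{\cL_w}\leq C(d)H\|f^h\|_{L^2(\Omega)}$, the improved energy estimate after squaring. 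Feeding this into the $L^2$-from-energy bound established above, namely $\|u-u^{h,H,w}\|_{L^2(\Omega)}\leq C(d)H\|u-u^{h,H,w}\|_{\cL_w}$, yields $\|u-u^{h,H,w}\|_{L^2(\Omega)}\leq C(d)H^2\|f^h\|_{L^2(\Omega)}$, i.e.\ the improved $L^2$ estimate.

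The step that requires genuine care, and which I expect to be the main obstacle, is the \emph{local} application of the weighted Poincar\'e inequality on each patch $\omega_i^H$. Theorem \ref{thm: weighted Poincare} with Example \ref{example: weight 1}, applied on $\omega_i^H$ with subsampled set $\omega_i^{h,H}$, involves the local weight $(H/\max\{h,\sfd(x,\omega_i^{h,H})\})^{d-2+\beta}$, whereas the global weight $w^{h,H}$ appearing in the statement is built from $\sfd(x,\Omega^{h,H})$ with $\Omega^{h,H}=\bigcup_{i}\omega_i^{h,H}$. Since $\Omega^{h,H}\supseteq\omega_i^{h,H}$ we have $\sfd(x,\Omega^{h,H})\leq\sfd(x,\omega_i^{h,H})$, so the global weight pointwise dominates the local one on $\omega_i^H$; the local weighted Poincar\'e bound therefore remains valid with $w^{h,H}$ on its right-hand side, which is exactly the direction we need. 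Verifying this domination, and checking that Assumptions \ref{assumption: for general Poincare} and \ref{assumption: regular domain} hold uniformly in $i$ for the cube/subsampled-patch pair $(\omega_i^H,\omega_i^{h,H})$, are the only places where the geometry of the decomposition genuinely enters.
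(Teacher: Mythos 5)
Your proposal is correct and follows the same route as the paper, whose entire proof of this theorem is the single line ``Substituting $a=w^{h,H}$ in Theorem \ref{thm:approximation error, subsampled spline} concludes the proof.'' Your expansion is in fact more faithful to what that substitution must mean: a literal substitution of $a=w^{h,H}$ into the \emph{statement} of Theorem \ref{thm:approximation error, subsampled spline} would carry along the factors $a_{\min}^{-1/2}$ and $\rho_{2,d}(H/h)$, which is not the claimed bound; one must instead rerun the proof and replace the subsampled Poincar\'e inequality by the weighted one (Theorem \ref{thm: weighted Poincare} with Example \ref{example: weight 1}) at the one step where it is invoked, which is exactly what you do, and your observation that the global weight $w^{h,H}$ pointwise dominates the per-patch weight built from $\sfd(x,\omega_i^{h,H})$ (so the local inequality survives with the global weight on its right-hand side) is precisely the point the paper's one-line proof leaves implicit.
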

	\begin{proof}
	 Substituting $a=w^{h,H}$ in Theorem \ref{thm:approximation error, subsampled spline} concludes the proof.
	\end{proof}
	Let us discuss the implication of this theorem. If we have the regularity assumption on $u$ that $\int_{\Omega} w^{H}|Du|^2 < \infty$, then the $L^2$ error will be bounded by \[C(d)H(\int_{\Omega} w^{H}(x)|Du(x)|^2\, \rd x)^{1/2}\] for any $h > 0$. Thus, under this regularity assumption, the estimate in Theorem \ref{thm:approximation error, weighted spline} survives in the zero-limit of $h$, while the estimate in Theorem \ref{thm:approximation error, subsampled spline} blows up. Therefore, the weighted inequality is needed to study the small $h$ regime.
    
    Furthermore, if we know additionally that $\sup_{h > 0}\|f^h\|_{L^2(\Omega)} < \infty$, then the error in the energy norm will be bounded by $C(d)H\sup_{h > 0}\|f^h\|_{L^2(\Omega)}$, while the $L^2$ error is bounded by $C(d)H^2\sup_{h > 0}\|f^h\|_{L^2(\Omega)}$ for any $h > 0$. The rate is better than before, and no blow-up occurs in the small $h$ limit. It is of future interest to look at in which practical scenario the assumption $\sup_{h > 0}\|f^h\|_{L^2(\Omega)} < \infty$ is possible to hold.
    
    We remark that other weight functions such as Example \ref{example: weight 2} can also be used in this subsection; the results are similar. The key is the weighted Poincar\'e inequality holds and the pointwise value is well-defined in the small $h$ limit.
    \section{Concluding Remarks}
    \label{sec: discussion}
    In this paper, we have studied the subsampled Poincar\' e inequality as a tool for function approximation and recovery. The context of subsampled data introduces an additional scale parameter (which is $h$) into the problem. It is important to capture the dependence of the approximation accuracy on $h$. For this purpose, we have developed some analytic tools that can be used to analyze the recovery error in the finite $h$ and zero-limit regime.
    
    In the finite $h$ regime, we demonstrated the optimality of the subsampled Poincar\'e inequality concerning the parameter $h$. The sliced data case was also investigated. We proved that the corresponding Poincar\'e inequality is optimal in the case $d\neq p$ and nearly optimal up to a logarithmic term in the critical case $d=p$. It is of future interest to improve the rate to the optimum in the critical case, and to generalize the results in the paper to function space beyond $W^{1,p}(\Omega)$. 
    
    When $d\geq p$, the error estimates obtained by the subsampled Poincar\'e inequality blows up as $h \to 0$; thus, it fails in the small $h$ limit. To identify a sensible limit, we assumed the function $u$ belongs to a weighted space, and developed a weighted Poincar\'e inequality to analyze the recovery error. The weighted estimates remain valid in the small $h$ limit, leading to non-degenerate function recovery in the zero-limit regime. 
    
    We note that our discussion on the weighted Poincar\'e inequality connects to the discussion on the degeneracy issue in graph Laplacian based semi-supervised learning approaches \cite{Semi-supervised}, which are formulated as discrete function recovery problems. Adjusting the weights of the Laplacian to achieve desired recovery performance is essential in practice. Recently, the authors in \cite{calder_properly-weighted_2018} established the consistency of the properly weighted graph Laplacian approach. The weight function there has the same form as our Example \ref{example: weight 1}. These Sobolev critical functions help regularize the process to obtain a non-degenerate recovery in the small data regime.
    
    Another main topic in this paper is the choice of basis functions. The direct use of Poincar\'e's inequality corresponds to piecewise constant basis functions, which achieve the same error rate as the Poincar\'e inequality indicates. Further, based on ideas from the spline approximation theory, we can improve the regularity of the basis function by solving some variational problems. These improved basis functions enhance recovery accuracy when the underlying function has better regularity. 
    
    As noted in the introduction, it is possible to use these basis functions for solving multiscale PDE problems, as in \cite{owhadi_multigrid_2017, owhadi2019operator}. We will discuss this topic in our companion numerical paper \cite{chen-hou_numerical}, regarding the tradeoff between the subsampled scale $h$, the exponential decay rate of the basis function, and the accuracy of the approximate solution.

    \vspace{0.2in}
    \noindent
    {\bf Acknowledgments}. The research was in part supported by NSF Grants DMS-1912654 and DMS-1907977. Y. Chen is supported by the Kortschak Scholars Program. We want to thank Professor Henri Berestycki and Jinchao Xu for their interest in our work and for bringing to our attention some of the relevant references. Y. Chen would like to thank Yousuf Soliman for many insightful discussions on the subsampled Poincar\'e inequality. We thank the anonymous reviewer for the helpful comments that improve this work.

\section{Appendix}
\label{sec: appendix}
\subsection{Proof of Proposition \ref{example: subsample Poincare}}
\label{subsec: proof subsampled poincare}
    \begin{proof}[Proof of Proposition \ref{example: subsample Poincare}]
        Let the measure $\lambda$ in Theorem \ref{thm: general Poincare} be supported on $D$ and uniform in $D$. Then, $\frac{1}{h^d}\int_D u=\int_{\overline{\Omega}} u \, \rd\lambda$. Hence, we have
        \[\|u-\frac{1}{h^{d}}\int_D u \|_{L^p(\Omega)} \leq \text{diam}(\Omega)\left(\int_0^1 \frac{\alpha(t)^{\frac{1}{p}}}{t^{\frac{d}{p}}}\, \rd t \right)\|Du\|_{L^p(\Omega)}\, ,  \]
        where $\alpha(t)$ is an upper bound on $\lambda(\frac{z-t\Omega}{1-t}\cap \Omega) $. A trivial bound is $\alpha(t)\leq 1$. On the other hand, since $\lambda$ is supported on $D$, we have $$\lambda(\frac{z-t\Omega}{1-t}\cap \Omega)=\lambda(\frac{z-t\Omega}{1-t}\cap D) \leq \frac{1}{h^d}\mu_d(\frac{z-t\Omega}{1-t})\leq \frac{H^d}{h^d}(\frac{t}{1-t})^d,$$
        where we have used the fact that the density of $\lambda$ on $D$ is $\frac{1}{h^d}$. Thus, we choose 
        \[\alpha(t)=\min \{1, \frac{H^d}{h^d}(\frac{t}{1-t})^d \} = \frac{H^d}{h^d}(\frac{t}{1-t})^d\cdot \chi_{[0,\frac{h}{H+h})}(t)+1\cdot\chi_{[\frac{h}{H+h},1]}(t) \, . \]
        We then calculate the integral:
        \begin{equation}
        \label{eqn: coef of Poincare}
        \begin{aligned}
        \int_0^1 \frac{\alpha(t)^{\frac{1}{p}}}{t^{\frac{d}{p}}}\, \rd t=(\frac{H}{h})^{\frac{d}{p}}\int_0^{\frac{h}{H+h}} \frac{1}{(1-t)^{\frac{d}{p}}} \, \rd t + \int_{\frac{h}{H+h}}^1 \frac{1}{t^{\frac{d}{p}}} \, \rd t\, .
        \end{aligned}
        \end{equation}
        When $d<p$, the integral in \eqref{eqn: coef of Poincare} becomes 
        \begin{equation}
        \label{eqn: integral term 1}
        \begin{aligned}
        \frac{p}{p-d}\left((\frac{H}{h})^{\frac{d}{p}}(1-(\frac{H}{H+h})^{1-\frac{d}{p}})+1-(\frac{h}{H+h})^{1-\frac{d}{p}} \right) \, .
        \end{aligned}
        \end{equation}
        Since $-1<\frac{d}{p}-1 < 0$, by Bernoulli's inequality, we have
        \[(\frac{H}{h})^{\frac{d}{p}}(1-(\frac{H}{H+h})^{1-\frac{d}{p}})=(\frac{H}{h})^{\frac{d}{p}}(1-(1+\frac{h}{H})^{\frac{d}{p}-1})\leq (\frac{H}{h})^{\frac{d}{p}}\frac{h}{H}(1-\frac{d}{p}) \leq 1-\frac{d}{p}\, ,  \]
        where we have used the fact $(\frac{H}{h})^{\frac{d}{p}}\frac{h}{H}=(\frac{h}{H})^{1-\frac{d}{p}}\leq 1$. Thus, we have the quantity in \eqref{eqn: integral term 1} bounded by $$\frac{p}{p-d}(1-\frac{d}{p}+1)=\frac{2p-d}{p-d} \leq C(d,p) \, .$$
        When $d=p$, the integral in \eqref{eqn: coef of Poincare} is
        \begin{equation}
        \label{eqn: integral term 2}
        \begin{aligned}
        \frac{H}{h}\ln(1+\frac{h}{H})+\ln(1+\frac{H}{h})\leq 1+\ln(1+\frac{H}{h}) \leq C\ln (1+\frac{H}{h}) \, .
        \end{aligned}
        \end{equation}
        When $d > p$, the integral in \eqref{eqn: coef of Poincare} becomes 
        \begin{equation}
        \label{eqn: integral term 3}
        \begin{aligned}
        \frac{p}{d-p}\left((\frac{H}{h})^{\frac{d}{p}}((1+\frac{h}{H})^{\frac{d-p}{p}}-1)+(1+\frac{H}{h})^{\frac{d-p}{p}}-1 \right) \leq C(d,p) (\frac{H}{h})^{\frac{d-p}{p}} \, .
        \end{aligned}
        \end{equation}
        The proof is completed.
    \end{proof}
    \subsection{Proof of Proposition \ref{prop: sharpness of the rate}}
    \label{subsec: sharpness subsampled}
\begin{proof}[Proof of Proposition \ref{prop: sharpness of the rate}]
        We construct the sequence $u_h$ explicitly. For $d=p$, we take 
        \[u_h(x)=\frac{\max~\{0, \ln (1+\frac{|x|}{h})-\ln 2\} }{\ln(1+\frac{1}{h})}\, . \]
        Then $u_h(x)$ equals $0$ in $D_h$. Thus, we have
        \begin{align*}
        \|u_h-\frac{1}{\mu_d(D_h)}\int_{D_h}u_h\|_{L^p(\Omega)}^p&=\int_{B^d(0,1)\backslash B^d(0,h)} u_h^p\\
        &=\mu_{d-1}(\bS^d)\int_h^1 \frac{\max~\{0, \ln (1+\frac{r}{h})-\ln 2\}^p }{\ln(1+\frac{1}{h})^p} r^{d-1}\, \rd r\\
        &\geq \mu_{d-1}(\bS^d) \frac{(\ln (1+\frac{1}{2h})-\ln 2)^p }{\ln(1+\frac{1}{h})^p} \int_{1/2}^1  r^{d-1}\, \rd r\\
        &\geq C(d,p)
        \end{align*}
        for some $C(d,p)>0 $ independent of $h$; we have used the condition $h\leq 1/2$  and the fact $\lim_{h \to 0} \frac{\ln (1+\frac{1}{2h})}{\ln (1+\frac{1}{h})}=1$. Here we use $\bS^d$ to represent the $d$ dimensional unit sphere. On the other hand, we obtain
        \begin{align*}
        \|Du_h\|_{L^p(\Omega)}^p&=\frac{1}{(\ln (1+\frac{1}{h}))^p}\int_{B^d(0,1)\backslash B^d(0,h)} \frac{1}{(h+|x|)^p}\, \rd x\\
        &=\mu_{d-1}(\bS^d)\frac{1}{(\ln (1+\frac{1}{h}))^p}\int_h^1 \frac{r^{d-1}}{(h+r)^p}\, \rd r\\
        &\leq C(d,p)\frac{1}{(\ln (1+\frac{1}{h}))^{d-1}}
        \end{align*}
        for some $C(d,p)$ dependent of $d,p$. In the last step, we have used the inequality $h+r \geq r$ and the fact that $\lim_{h \to 0} \frac{\ln (1+\frac{1}{h})}{\ln (\frac{1}{h})}=1$.
        
        Hence, for this sequence $u_h$, we get
        \[\frac{\|u_h-\frac{1}{\mu_d(D_h)}\int_{D_h}u_h\|_{L^p(\Omega)}}{\|Du_h\|_{L^p(\Omega)}}\geq C(d,p)(\ln(1+\frac{1}{h}))^{\frac{d-1}{d}}=C(d,p)\rho_{p,d}(\frac{1}{h})\, . \]
        For $d > p$, we construct 
        \[u_h(x)=\min~\{\frac{\max~\{|x|-h,0\}}{h},1\} \, . \]
        Then, $u_h(x)$ vanishes in $D_h$, and 
        \begin{align*}
        \|u_h-\frac{1}{\mu_d(D_h)}\int_{D_h}u_h\|_{L^p(\Omega)}^p&=\int_{B^d(0,1)\backslash B^d(0,h)} u_h^p\\
        &\geq \int_{B^d(0,1)\backslash B^d(0,1/2)} u_h^p = C(d,p)\, ,
        \end{align*}
        where $C(d,p)$ is independent of $h$. Here we have used the fact $h\leq 1/4$ and $u_h=1$ when $|x|\geq 1/2$. In the meanwhile, we get
        \begin{align*}
        \|Du_h\|_{L^p(\Omega)}^p&=\int_{B^d(0,2h)\backslash B^d(0,h)} \frac{1}{h^p}\, \rd x = C(d,p)h^{d-p}\, .
        \end{align*}
        Hence, we conclude that
        \[\frac{\|u_h-\frac{1}{\mu_d(D_h)}\int_{D_h}u_h\|_{L^p(\Omega)}}{\|Du_h\|_{L^p(\Omega)}}\geq C(d,p)h^{\frac{p-d}{p}}=C(d,p)\rho_{p,d}(\frac{1}{h})\, . \]
        The proof is completed.
    \end{proof}
    \subsection{Proof of Proposition \ref{example: sliced Poincare}}
    \label{subsec: proof sliced poincare}
    \begin{proof}[Proof of Proposition \ref{example: sliced Poincare}]
        Similar to the proof of Proposition \ref{example: subsample Poincare}, we first characterize $\alpha(t)$, and then calculate the related integral. Since $\lambda$ is supported on $\Gamma$, we have $$\lambda(\frac{z-t\Omega}{1-t}\cap \Omega)=\lambda(\frac{z-t\Omega}{1-t}\cap \Gamma) \leq \frac{H^{d-1}}{h^{d-1}}(\frac{t}{1-t})^{d-1},$$
        where we have used the fact that the density of $\lambda$ on the $d-1$ dimensional $\Gamma$ is $\frac{1}{h^{d-1}}$. Hence, we choose
        \[\alpha(t)=\min \{1, \frac{H^{d-1}}{h^{d-1}}(\frac{t}{1-t})^{d-1} \} = \frac{H^{d-1}}{h^{d-1}}(\frac{t}{1-t})^{d-1}\cdot \chi_{[0,\frac{h}{H+h})}(t)+1\cdot\chi_{[\frac{h}{H+h},1]}(t) \, . \]
        The corresponding integral is
        \begin{equation}
        \label{eqn: coef of Poincare, sliced}
        \begin{aligned}
        \int_0^1 \frac{\alpha(t)^{\frac{1}{p}}}{t^{\frac{d}{p}}}\, \rd t=(\frac{H}{h})^{\frac{d-1}{p}}\int_0^{\frac{h}{H+h}} \frac{1}{t^{\frac{1}{p}}(1-t)^{\frac{d-1}{p}}} \, \rd t + \int_{\frac{h}{H+h}}^1 \frac{1}{t^{\frac{d}{p}}} \, \rd t\, .
        \end{aligned}
        \end{equation}
        For the first term in \eqref{eqn: coef of Poincare, sliced}, 
        \begin{align*}
        (\frac{H}{h})^{\frac{d-1}{p}}\int_0^{\frac{h}{H+h}} \frac{1}{t^{\frac{1}{p}}(1-t)^{\frac{d-1}{p}}} \, \rd t \leq & (\frac{H}{h})^{\frac{d-1}{p}} (1+(1+\frac{h}{H})^{\frac{d-1}{p}}) \int_0^{\frac{h}{H+h}} \frac{1}{t^{\frac{1}{p}}} \, \rd t\\
        = & \frac{p}{p-1}(\frac{H}{h})^{\frac{d-1}{p}} (1+(1+\frac{h}{H})^{\frac{d-1}{p}}) (1+\frac{H}{h})^{\frac{1}{p}-1} \\
        = & \frac{H^{\frac{d-1}{p}}(H+h)^{\frac{1}{p}-1}}{h^{\frac{d-p}{p}}} (1+(1+\frac{h}{H})^{\frac{d-1}{p}})\\
        \leq & (2^{\frac{1}{p}-1}+1)(\frac{H}{h})^{\frac{d-p}{p}}(2+2^{\frac{d-1}{p}}) \, ,
        \end{align*} 
        where in the last step we have used the estimate \[(H+h)^{\frac{1}{p}-1}\leq H^{\frac{1}{p}-1}+(2H)^{\frac{1}{p}-1} \quad \text{and} \quad (1+\frac{h}{H})^{\frac{d-1}{p}}\leq 1 + 2^{\frac{d-1}{p}}\, .\] This is due to $0 \leq h \leq H$ and the fact that, the value of an one dimensional non-negative monotone function will not be larger than the sum of its two endpoint values in an interval. Observe that the last term in the above calculation will be bounded by a constant $C(d,p)$ if $d \leq p$ and by $C(d,p)(H/h)^{\frac{d-p}{p}}$ if $d > p$. Moreover, the second term in \eqref{eqn: coef of Poincare, sliced} is the same as in \eqref{eqn: coef of Poincare}. Thus, the same argument there can be applied here. Finally, we obtain the Poincar\'e inequality with the same $\tilde{\rho}_{p,d}$ dependence on $H/h$ as Proposition \ref{example: subsample Poincare}. 
		\end{proof}
\subsection{Proof of Proposition \ref{prop: optimal rate demon, sliced case}}
\begin{proof}[Proof of Proposition \ref{prop: optimal rate demon, sliced case}]
    The proof is the same as the proof of Proposition \ref{prop: sharpness of the rate}. The critical examples that achieve the lower bound are the same.    
\end{proof}
	
\subsection{Proof of Theorem \ref{thm: weight Poincare W11}}
\label{Proof of Theorem thm: weight Poincare W11}
\begin{proof}[Proof of Theorem \ref{thm: weight Poincare W11}]
		Assumption \ref{assumption: regular domain} implies $C_1\text{diam}(\Omega) \leq H \leq C_2\text{diam}(\Omega)$ and $C_1\text{diam}(D) \leq h \leq C_2\text{diam}(D)$. We use the result in our Theorem \ref{thm: general Poincare} to get:
		\begin{align}
		\label{eqn: in weight Poincare}
		\|u-\int_{\Omega} u \,\rd\lambda \|_{L^1(\Omega)}&\leq \text{diam}(\Omega)\int_{\Omega} \left(\int_0^1 \frac{1}{t^d} \lambda(\frac{z-t\Omega}{1-t}\cap D)\, \rd t \right) |Du(z)|  \, \rd z \, ,
		\end{align}
		where $\lambda=\frac{1}{h^d}\mu_d$ in $D$.
		Now, we characterize $\lambda(\frac{z-t\Omega}{1-t}\cap D)$ in more details, rather than just using a uniform bound $\alpha(t)$ as before. We study when the intersection $\frac{z-t\Omega}{1-t}\cap D$ becomes empty, i.e. $\sfd(\frac{z-t\Omega}{1-t}, D) > 0$. Without loss of generality we assume $0 \in D$, otherwise we can shift the domain to contain the origin. Then, $0 \in D \cap \frac{t\Omega}{1-t}$. If $|z|$ is large then $\frac{z-t\Omega}{1-t}$ will be separated from $D$. A sufficient condition will be
		\[\frac{|z|}{1-t} > \text{diam}(\frac{t\Omega}{1-t})+\text{diam}(D) \geq \frac{1}{C_2}(\frac{tH}{1-t}+h) \, . \]
		This is equivalent to $t \leq \frac{C_2|z|-h}{H-h}$. Thus we obtain
		\begin{align}
		\label{eqn: proof in weighted 0 condition}
		t \leq \frac{C_2|z|-h}{H-h}\quad \Rightarrow \quad \lambda(\frac{z-t\Omega}{1-t}\cap D)=0 \, . 
		\end{align}
		We decompose the integral on the right-hand side of equation \eqref{eqn: in weight Poincare} into two parts (the integrand is abbreviated as $I$): 
		\[\int_{\Omega} I \rd z = \int_{\{C_2|z| < 2h\}\cap \Omega} I\rd z +\int_{\{C_2|z|\geq 2h\} \cap \Omega} I\rd z \, .\]
		For the first part, we use the result in Corollary \ref{example: subsample Poincare}:
		\begin{equation}
		\begin{aligned}
		\int_{\{C_2|z| < 2h\}\cap \Omega} I\rd z &\leq C(d,p)(\frac{H}{h})^{d-1}\int_{\{C_2|z| < 2h\}\cap \Omega} |Du(z)| \, \rd z \\
		&\leq C(d,p)\int_{\{C_2|z| < 2h\}\cap \Omega} \left(\frac{H}{\max \{h,|z|\}}\right)^{d-1}|Du(z)| \, \rd z\\
		&\leq C(d,p)\int_{\{C_2|z| < 2h\}\cap \Omega} w(z)|Du(z)| \, \rd z
		\end{aligned}
		\end{equation}
		where the last line is due to $\sfd(z,D) \leq |z|$.\\
		For the second part, we have $C_2|z| \geq 2h$. Due to equation \eqref{eqn: proof in weighted 0 condition}, for $z \in \{C_2|z|\geq 2h\} \cap \Omega$ and at the same time $z \in \{C_2|z| \leq H\}$, we have
		\begin{equation}
		\begin{aligned}
		\int_0^1 \frac{1}{t^d} \mu_d(\frac{z-t\Omega}{1-t}\cap D)\, \rd t&\leq \int_{\frac{C_2|z|-h}{H-h}}^1 \frac{1}{t^d} \, \rd t\\
		&\leq \frac{1}{d-1} \left( (\frac{C_2|z|-h}{H-h})^{1-d}-1 \right)\\
		&\leq C(d,p)(\frac{H}{|z|})^{d-1} \leq C(d,p) w(z) \, ,
		\end{aligned}
		\end{equation}
		where the last two lines are due to the relation $0 \leq h \leq \frac{C_2}{2}|z|$ and $\sfd(z,D) \leq |z|$.
		For $z \in \{C_2|z|\geq 2h\} \cap \Omega$ and also $z \in \{C_2|z| > H\}$, the integral vanishes due to equation \eqref{eqn: proof in weighted 0 condition}. Combining all these together, we arrive at
		\[\|u-\frac{1}{h^{d}}\int_D u \|_{L^1(\Omega)} \leq C(d,p)H \|Du\|_{L^1_w(\Omega)} \, . \]
		This completes the proof.
	\end{proof}
	\subsection{Proof of Theorem \ref{thm: weighted Poincare}}
	\label{Proof of Theorem thm: weighted Poincare}
	\begin{proof}[Proof of Theorem \ref{thm: weighted Poincare}] By the triangle inequality, we get
		\begin{equation}
		\label{eqn: proof in weighted Poincare 1}
		\begin{aligned}
		\|u-\frac{1}{h^{d}}\int_D u \|_{L^p(\Omega)} &\leq \|u-\frac{1}{H^{d}}\int_\Omega u\|_{L^p(\Omega)}+H^{\frac{d}{p}}|\frac{1}{H^{d}}\int_\Omega u-\frac{1}{h^{d}}\int_D u|\\
		&\leq C(d,p)H\|Du\|_{L^p(\Omega)}+H^{\frac{d}{p}-d}\int_{\Omega}\int_D \frac{1}{h^d}|u(x)-u(y)| \, \rd x \rd y \, ,
		\end{aligned}
		\end{equation}
		where we have used the standard Poincar\' e inequality for the first part. For the second  part, due to the proof in Theorem \ref{thm: general Poincare} and Theorem \ref{thm: weight Poincare W11}, we have 
		\begin{equation*}
		\begin{aligned}
		H^{\frac{d}{p}-d}\int_{\Omega}\int_D &\frac{1}{h^d}|u(x)-u(y)| \, \rd x \rd y\\ 
		&\leq C(d,p)H^{\frac{d}{p}-d+1}\int_{\Omega} \left(\frac{H}{\max \{h,\sfd(z,D)\}}\right)^{d-1}|Du(z)|\, \rd z \, .
		\end{aligned}
		\end{equation*}
		Using the H\"older inequality, we get
		\begin{equation*}
		\begin{aligned}
		&\int_{\Omega} \left(\frac{H}{\max \{h,\sfd(z,D)\}}\right)^{d-1}|Du(z)|\, \rd z\\
		=&\int_{\Omega} \left(\frac{H}{\max \{h,\sfd(z,D)\}}\right)^{d-1}w(z)^{-\frac{1}{p}}\cdot w(z)^{\frac{1}{p}}|Du(z)|\, \rd z\\
		\leq & \left(\int_{\Omega} \left(\frac{H}{\max \{h,\sfd(z,D)\}}\right)^{\frac{p(d-1)}{p-1}}w(z)^{-\frac{1}{p-1}}\, \rd z \right)^{\frac{p-1}{p}}\|Du\|_{L^p_w(\Omega)} \\
		\leq & C_w^{1-\frac{1}{p}}H^{d-\frac{d}{p}} \|Du\|_{L^p_w(\Omega)} \, .
		\end{aligned}
		\end{equation*}
		Plugging this into equation \eqref{eqn: proof in weighted Poincare 1} gives
		\begin{align*}
		    \|u-\frac{1}{h^{d}}\int_D u \|_{L^p(\Omega)} &\leq C(d,p)H\|Du\|_{L^p(\Omega)}+C_w^{1-\frac{1}{p}}C(d,p) H\|Du\|_{L^p_w(\Omega)} \\
		    &\leq C(d,p) H \|Du\|_{L^p_w(\Omega)}
		\end{align*}
		where $C(d,p)$ represents a generic constant that depends on $d$ and $p$ only.
	\end{proof}
\end{document}